\documentclass[10pt]{amsart}

\usepackage[margin=1in]{geometry}
\usepackage{amssymb}
\usepackage{tikz} 
\usepackage{hyperref}
\usepackage[all,cmtip]{xy}
\usepackage{booktabs}

\newtheorem{theorem}{Theorem}[section]

\newtheorem{proposition}[theorem]{Proposition}
\newtheorem{corollary}[theorem]{Corollary}

\theoremstyle{definition}

\newtheorem{example}[theorem]{Example}

\numberwithin{equation}{section}
\numberwithin{table}{section}
\numberwithin{figure}{section}

\DeclareMathOperator{\supp}{supp}

\def\SS{\mathfrak{S}}

\title{Dimensions of compositions modulo a prime}
\author{Jia Huang}
\address{Department of Mathematics and Statistics, University of Nebraska at Kearney, Kearney, NE 68849, USA}
\email{huangj2@unk.edu}
\keywords{$0$-Hecke algebra, congruence, descent class, multinomial coefficient, ribbon number}

\begin{document}

\begin{abstract}
The (ordinary) representation theory of the symmetric group is fascinating and has rich connections to combinatorics, including the Frobenius correspondence to the self-dual graded Hopf algebra of symmetric functions. 
The $0$-Hecke algebra (of type $A$) is a deformation of the group algebra of the symmetric group, and its representation theory has an analogous correspondence to the dual graded Hopf algebras of quasisymmetric functions and noncommutative symmetric functions.
Macdonald used the hook length formula for the number of standard Young tableaux of a fixed shape to determine how many irreducible representations of the symmetric group have dimensions indivisible by a prime $p$.
In this paper, we study the dimensions of the projective indecomposable modules of the $0$-Hecke algebra modulo $p$; such a module is indexed by a composition and its dimension is given by a ribbon number, i.e., the cardinality of a descent class.
Applying a result of Dickson on the congruence of multinomial coefficients, we count how many ribbon numbers belong to each congruence class modulo $p$.
We also extend the result to other finite Coxeter groups.
\end{abstract}

\maketitle

\section{Introduction}

Given a finite group $G$ and a prime $p$, let $m_p(G)$ denote the number of (complex) irreducible representations of $G$ with dimension coprime to $p$.
For the \emph{symmetric group} $\SS_n$, which consists of all permutations of $[n]:=\{1,2,\ldots,n\}$, each irreducible representation is indexed by a \emph{partition} $\lambda$ of $n$, i.e., a decreasing sequence of positive integers $\lambda=(\lambda_1,\ldots,\lambda_\ell)$ with $|\lambda|:=\lambda_1+\cdots+\lambda_\ell=n$, and has dimension given by the number $f^\lambda$ of standard Young tableaux of shape $\lambda$.
Thus $f^\lambda$ is also known as the \emph{dimension} of the partition $\lambda$.
Using the $p$-core/quotient of a partition $\lambda$ and the well-known hook length formula for $f^\lambda$, Macdonald~\cite{Macdonald} showed that 
\[ 
m_p(\SS_n) = \prod_{j=0}^k \left[ \prod_{i=1}^\infty \frac{1}{(1-x^i)^{p^j}} \right]_{x^{n_j}},
\]
where $[f(x)]_{x^d}$ denotes the coefficient of $x^d$ in a power series $f(x)$ and 
\[ 
n=n_0+n_1 p + \cdots+ n_k p^k, \quad n_0, n_1, \ldots, n_k \in\{0,1,\ldots,p-1\}.
\]
In particular, if $n = p^{d_1} + \cdots +p^{d_k}$ is a sum of distinct powers of $p$, then $m_p(\SS_n) = p^{d_1+\cdots+d_k}$; this applies to all values of $n$ when $p=2$.

Extending Macdonald's result, Amrutha and T. Geetha~\cite{AG} obtained some results on $m_p(G)$ when $p$ is a power of $2$, and Khanna~\cite{Khanna} computed the number of partitions $\lambda$ of $n$ with $f^\lambda$ congruent to $1$ or $3$ modulo $4$ for certain values of $n$.
There have also been studies on partitions with odd dimensions~\cite{APS, GKNT} and more generally, on the divisibility of character values of $\SS_n$.
For example, Miller~\cite{Miller} conjectured that almost every character value of $\SS_n$ is congruent to $0$ modulo a prime $p$ as $n\to\infty$, Peluse~\cite{EvenChar} confirmed this conjecture for some small primes, and Peluse---Soundararajan~\cite{CharModPrime} established the conjecture for all prime moduli, although it remains open when the modulus is a power of a prime; see also Ganguly, Prasad, and Spallone~\cite{DivChar}.

On the other hand, there is a deformation of the group algebra of the symmetric group $\SS_n$ called the \emph{(type $A$) $0$-Hecke algebra} $H_n(0)$.
Similarly to the correspondence between the (complex) representation theory of $\SS_n$ and the self-dual graded Hopf algebra Sym of symmetric functions, the representation theory of $H_n(0)$, first studied by Norton~\cite{Norton}, admits a correspondence to the dual graded Hopf algebras QSym of quasisymmetric functions and $\mathbf{NSym}$ of noncommutative symmetric functions.
We briefly recall this correspondence below; see, e.g., Krob and Thibon~\cite{KrobThibon}.

Every irreducible $H_n(0)$-module $\mathbf{C}_\alpha$ is indexed by a \emph{composition} $\alpha$ of $n$, that is, a sequence $\alpha=(\alpha_1,\ldots,\alpha_\ell)$ of positive integers whose sum is $n$.
Every projective indecomposable $H_n(0)$-module is the projective cover $\mathbf{P}_\alpha$ of some $\mathbf{C}_\alpha$, so its \emph{top} (i.e., the quotient by its radical) is isomorphic to $\mathbf{C}_\alpha$. 
Each $\mathbf{C}_\alpha$ corresponds to the \emph{fundamental quasisymmetric function} $F_\alpha$, and each $\mathbf{P}_\alpha$ corresponds to the \emph{noncommutative ribbon Schur function} $\mathbf{s}_\alpha$; this gives two isomorphisms of graded Hopf algebras.
While $\mathbf{C}_\alpha$ is one dimensional, $\mathbf{P}_\alpha$ has a basis indexed by the \emph{descent class} $\{w\in \SS_n: D(w)=D(\alpha)\}$, where $D(w):=\{i\in[n-1]: w(i)>w(i+1)\}$ and $D(\alpha):=\{\alpha_1, \alpha_1+\alpha_2, \ldots, \alpha_1+\cdots+\alpha_{\ell-1}\}$; see also our earlier work~\cite{H0Tab} for a combinatorial realization of $\mathbf{P}_\alpha$  using standard tableaux of a ribbon shape corresponding to $\alpha$.
It follows that the dimension of $\mathbf{P}_\alpha$ is given by the \emph{(type $A$) ribbon number} 
\[ r_\alpha : = |\{ w\in \SS_n : D(w) = D(\alpha)\}|, \]
which can be viewed as the \emph{dimension} of the composition $\alpha$.
Note that $r_\alpha$ is also the \emph{flag $h$-vector} indexed by $D(\alpha)$ for the Boolean algebra of subsets of $[n-1]$.
Moreover, the descent classes of $\SS_n$ form a basis for the \emph{descent algebra}, which is an important subalgebra of the group algebra of $\SS_n$ introduced by Solomon~\cite{Solomon} and frequently studied in combinatorics and other areas.

Therefore, it is natural to study the number of compositions $\alpha$ of $n$ with $r_\alpha$ coprime to a given prime $p$, or more generally, the \emph{composition dimension $p$-vector} $c_p(n) := \left( c_{p,i}(n): i\in\mathbb{Z}_p \right)$, where
\[
c_{p,i}(n):= |\{\alpha\models n: r_\alpha \equiv i \pmod p\}| 
\quad \text{for all $i\in\mathbb{Z}_p$}. 
\]
In this paper, we use an expression of $r_\alpha$ as an alternating sum of multinomial coefficients and apply a theorem of Dickson~\cite{Dickson} on the congruence of multinomial coefficients modulo $p$ to determine $c_p(n)$.

Our result can be spelled out more explicitly for certain values of $n$, such as multiples of a power of $p$ and sums of distinct powers of $p$ (the latter includes all values of $n$ when $p=2$), but it becomes tedious for other values of $n$.
It would be nice to develop a different approach, even though it is not clear to us whether the results on $c_p(n)$ can be interpreted by operations on standard Young tableaux of ribbon shapes, the representation theory of the $0$-Hecke algebra $H_n(0)$, or the flag $h$-vector of the Boolean algebra of subsets of $[n-1]$.

It is possible to generalize our results to all finite Coxeter groups, for which the descent set and ribbon number are well defined.
For type $B$ and type $D$, we use the same method as in type $A$ to obtain similar results.
In particular, we show that every ribbon number in type $B$ and type $D$ is odd.
In contrast, the corresponding result in type $A$ is not as nice.
For example, we have $c_2(n)=(0,2^{n-1})$ if $n$ is a power of $2$, $c_2(n)=(2^{n-2},2^{n-2})$ if $n$ is a sum of two or three distinct powers of $2$, and $c_2(n)=2^{n-7}(35,29)$ if $n$ is a sum of four distinct powers of $2$.
For the exceptional types, we provide some data from computations in Sage.

This paper is structured as follows. 
First, we provide some preliminaries in Section~\ref{sec:prelim}.
Next, we give our results for type A in Section~\ref{sec:A}.
Then we extend our results to type $B$ and type $D$ in Section~\ref{sec:B} and Section~\ref{sec:D}, respectively.
Finally, we conclude the paper with a brief discussion on the exceptional types and some questions for future research in Section~\ref{sec:conclusion}.

\section{Preliminaries}\label{sec:prelim}

We first recall some basic definitions for Coxeter groups and their connections with combinatorics; see, e.g., Bj\"orner and Brenti~\cite{BjornerBrenti} for details.

Let $W$ be a group generated by a set $S$ with relations $(st)^{m_{st}} = 1$ for all $s, t\in S$, where $m_{st} = 1$ whenever $s=t$ and $m_{st} = m_{ts} \ge 2$ whenever $s\ne t$, or equivalently, $s^2=1$ for all $s\in S$ and $(sts\cdots)_{m_{st}} = (tst \cdots)_{m_{st}}$ whenever $s\ne t$, where $(aba\cdots)_m$ denotes the alternating product of $a$ and $b$ with length $m$.
The pair $(W,S)$ is called a \emph{Coxeter system} and $W$ is called a \emph{Coxeter group}.
We often label the elements of $S$ by nonnegative integers and identify each $s_i\in S$ with the index $i$.
The \emph{Coxeter diagram} of a Coxeter system $(W,S)$ is a graph whose vertices are the elements of $S$;
there is an edge between $s$ and $t$ whenever $m_{st}\ge 3$, and an edge is labeled with $m_{st}$ whenever $m_{st}\ge4$.
A Coxeter system is \emph{irreducible} if its Coxeter graph is connected.
Finite irreducible Coxeter systems are classified into types $A_n$, $B_n$, $D_n$, $E_6$, $E_7$, $E_8$, $F_4$, $H_3$, $H_4$, and $I_2(m)$.

For each $w\in W$, an expression $w=s_{i_1}\cdots s_{i_k}$ of $w$ as a product of elements in $S$ is \emph{reduced} if $k$ is as small as possible; the minimum value of $k$ is called the \emph{length} $\ell(w)$ of $w$.
The \emph{set of (right) descents} of $w$ is 
\[ 
D(w):= \{s\in S: \ell(ws)<\ell(w)\}. 
\]

As a $q$-deformation of the group algebra of a Coxeter system $(W,S)$, the \emph{Iwahori-Hecke algebra} $H_S(q)$ is generated by $\{T_s: s\in S\}$ with relations 
\begin{itemize}
\item
$(T_s+1)(T_s-q)=0$ for all $s\in S$, and
\item
$(T_sT_tT_s\cdots)_{m_{st}} = (T_tT_sT_t\cdots)_{m_{st}}$ for all distinct $s, t\in S$.
\end{itemize}
The algebra $H_S(q)$ has a (linear) basis $\{T_w: w\in W\}$, where $T_w := T_{i_1}\cdots T_{i_k}$ whenever $w=s_{i_1}\cdots s_{i_k}$ is a reduced expression.
While specializing $q=1$ recovers the group algebra of $W$, taking $q=0$ in the definition of $H_S(q)$ gives the \emph{$0$-Hecke algebra} $H_S(0)$.
Norton~\cite{Norton} worked out the representation theory of the $0$-Hecke algebra $H_S(0)$ of a finite Coxeter system $(W,S)$: 
Every irreducible $H_S(0)$-module $\mathbf{C}_I$ is one dimensional and indexed some $I\subseteq S$, and every projective indecomposable $H_S(0)$-module $\mathbf{P}_I$ is the projective cover of some $\mathbf{C}_I$ with a basis indexed by the \emph{descent class} $\{w\in W: D(W)=I\}$.
We are interested in the \emph{ribbon number}
\[ r_I^S := |\{ w\in W: D(w)=I\}| = \dim(\mathbf{P}_I). \]

Every subset $I$ of $S$ generates a \emph{parabolic subgroup} $W_I$ of $W$.
A (left) coset of $W_I$ has a unique representative of minimum length, which is the element $w$ in this coset with $D(w)\subseteq S\setminus I$.
Thus 
\[ \left|W/W_{S\setminus I}\right| = \left|\left\{ w\in W: D(w)\subseteq I \right\}\right| = \sum_{J\subseteq I} r_J^S. \]
It follows from inclusion-exclusion that
\[ r_I^S = \sum_{J\subseteq I} (-1)^{|I|-|J|} \left|W/W_{S\setminus I}\right|. \]

A finite Coxeter system $(W,S)$ has a longest element $w_0\in W$, which satisfies $\ell(w_0w)=\ell(w_0)-\ell(w)$ for all $w\in W$.
This implies that $D(w_0w)=S\setminus D(w)$ since for each $s\in S$, we have 
\[ \ell(w_0ws) = \ell(w_0)-\ell(ws)>\ell(w_0)-\ell(w)=\ell(w_0w) \Longleftrightarrow \ell(ws) < \ell(w). \]
Therefore, there is a symmetry $r_I^S = r_{S\setminus I}^S$ among the ribbon numbers.

An important example of the Coxeter groups is the \emph{symmetric group} $\SS_n$, which consists of all permutations of $[n]:=\{1,2,\ldots,n\}$.
It is generated by $s_1, \ldots, s_{n-1}$, where $s_i:=(i,i+1)$ is the \emph{adjacent transposition} that swaps $i$ and $i+1$ for $i=1,2,\ldots,n-1$, with the following relations:
\[\begin{cases}
s_i^2 = 1, \ 1\le i\le n-1, \\
s_is_{i+1}s_i = s_{i+1}s_is_{i+1},\ 1\le i\le n-2,\\
s_is_j=s_js_i,\ 1\le i,j\le n-1,\ |i-j|>1.
\end{cases}\]
Thus $\SS_n$ is the Coxeter group of type $A_n$, whose Coxeter diagram is below.
\[
\xymatrix @C=12pt{
s_1 \ar@{-}[r] & s_2 \ar@{-}[r] & \cdots \ar@{-}[r] & s_{n-2} \ar@{-}[r] & s_{n-1}
}
\]
For each $w\in\SS_n$, the length $\ell(w)$ coincides with the number of inversion pairs
\[ \mathrm{inv}(w) := |\{(i,j): 1\le i<j\le n,\ w(i) > w(j) \}|, \]
and identifying $s_i$ with $i$, we have 
\[ D(w) = \{ i\in[n-1]: w(i) > w(i+1) \} . \]

The (type $A$) $0$-Hecke algebra $H_n(0)$ is the monoid algebra of the monoid generated by $\pi_1, \ldots, \pi_{n-1}$ (note that $\pi_i$ is not $T_i$ but rather $T_i+1$ when $q=0$) with relations
\[\begin{cases}
\pi_i^2 = \pi_i, \ 1\le i\le n-1, \\
\pi_i\pi_{i+1}\pi_i = \pi_{i+1}\pi_i\pi_{i+1},\ 1\le i\le n-2,\\
\pi_i\pi_j=\pi_j\pi_i,\ 1\le i,j\le n-1,\ |i-j|>1.
\end{cases}\]

To describe the representation theory of $H_n(0)$, recall that a \emph{composition} is a sequence $\alpha = (\alpha_1, \ldots, \alpha_\ell)$ of positive integers.
The \emph{size} of $\alpha$ is $|\alpha| := \alpha_1+\cdots+\alpha_\ell$ and the \emph{parts} of $\alpha$ are $\alpha_1,\ldots,\alpha_\ell$.
If $|\alpha|=n$ then we say $\alpha$ is a \emph{composition of $n$} and write $\alpha \models n$.
Define $\sigma_i := \alpha_1 + \cdots + \alpha_i$ for $i=0,1,\ldots,\ell$.
It is clear that $\sigma_0=0$, $\sigma_\ell=n$, and $D(\alpha) := \{\sigma_1,\ldots,\sigma_{\ell-1}\}$ is a subset of $[n-1]$.
Thus a composition $\alpha\models n$ can be encoded in a binary string $a_1\cdots a_n$ whose $j$th digit is one if and only if $j\in D(\alpha)\cup\{n\}$.
It follows that there are exactly $2^{n-1}$ compositions of $n$.
The \emph{length} of $\alpha$ is $\ell(\alpha) := \ell = |D(\alpha)|+1$.

Each irreducible $H_n(0)$-module $\mathbf{C}_\alpha$ is indexed by a composition $\alpha$ of $n$ and has dimension one.
Each projective indecomposable $H_n(0)$-module $\mathbf{P}_\alpha$ is the projective cover of some $\mathbf{C}_\alpha$ and has a basis indexed by permutations of $[n]$ with descent set equal to $D(\alpha)$, so its dimension is the \emph{(type $A$) ribbon number}
\[ 
r_\alpha := | \{ w\in\SS_n: D(w) = D(\alpha) \} |.
\]
We call $r_\alpha$ the \emph{dimension} of the composition $\alpha$.

To compute $r_\alpha$, we use the following \emph{multinomial coefficient}, where $m, m_1, \ldots, m_k$ are nonnegative integers.
\[ \binom{m}{m_1,\ldots,m_k} :=
\begin{cases}
\frac{m!}{m_1!\cdots m_\ell!} & \text{if } m_1+\cdots+m_k=m, \\
0 & \text{otherwise.} 
\end{cases} \]
Given a composition $\beta=(\beta_1, \ldots, \beta_\ell)$ of $n$, a permutation $w\in\SS_n$ satisfies $D(w) \subseteq D(\beta)$ if and only if $w(j) < w(j+1)$ for all $j\in[n-1]\setminus D(\beta)$.
Thus 
\[ | \{ w\in \SS_n: D(w) \subseteq D(\beta) \}| = \binom{n}{\beta} := \binom{n}{\beta_1, \ldots, \beta_\ell}. \]
Applying inclusion-exclusion to this yields a formula
\begin{equation}\label{eq:ribbon}
r_\alpha = \sum_{ \beta \preccurlyeq \alpha } (-1)^{\ell(\alpha) - \ell(\beta)} \binom{n}{\beta} 
\end{equation}
for every composition $\alpha$ of $n$, where $\beta \preccurlyeq \alpha$ means that $\beta$ is a composition of $n$ with $D(\beta)\subseteq D(\alpha)$, or in other words, $\beta$ is a composition of $n$ refined by $\alpha$.
There is also an equivalent formula for $r_\alpha$ given below, where $1/k! := 0$ if $k<0$:
\[ r_\alpha = n! \det \left( \frac1{(\sigma_j-\sigma_{i-1})!} \right)_{i,j=1}^\ell \]

We will mainly use the formula~\eqref{eq:ribbon} for $r_\alpha$ to study the \emph{type $A$ composition dimension $p$-vector} $c_p(n) := \left(c_{p,i}(n): i\in\mathbb{Z}_p\right)$, where
\[ 
c_{p,i}(n) := |\{ \alpha\models n: r_\alpha \equiv i \pmod p\}.
\]
We can write a positive integer $n$ in base $p$ as $n=n_0+n_1p+\cdots+n_kp^k$, where $n_0, n_1, \ldots, n_k\in\{0,1,\ldots,p-1\}$ and $n_k>0$; this gives a vector $[n]_p:=(n_0,n_1,\ldots,n_k)$.
A well-known theorem of Lucas determined the residue of a binomial coefficient modulo a prime $p$, and Dickson~\cite[p.~76]{Dickson} generalized it to the multinomial coefficient.
\begin{theorem}[Dickson]\label{thm:Dickson}
Let $n$ be a positive integer $n$ with $[n]_p=(n_0,n_1,\ldots,n_k)$.
Given nonnegative integers $\beta_1,\ldots,\beta_\ell$ whose sum is $n$, we write $[\beta_i]_p=(\beta_{i0},\beta_{i1},\ldots,\beta_{ik})$ for $i=1,\ldots,\ell$ by abuse of notation (adding trailing zeros if necessary).
Then
\[ \binom{n}{\beta_1,\ldots,\beta_\ell} \equiv \prod_{j=0}^k \binom{n_j}{\beta_{1j}, \ldots, \beta_{\ell j}} \pmod p. \]
\end{theorem}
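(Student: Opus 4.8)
The plan is to prove Dickson's theorem by a generating-function argument in the polynomial ring $\mathbb{F}_p[x_1,\ldots,x_\ell]$, directly generalizing the standard polynomial proof of Lucas' theorem. First I would record the basic dictionary: by the multinomial theorem,
\[ (x_1+\cdots+x_\ell)^n = \sum_{\gamma_1+\cdots+\gamma_\ell=n} \binom{n}{\gamma_1,\ldots,\gamma_\ell} x_1^{\gamma_1}\cdots x_\ell^{\gamma_\ell}, \]
so the residue of $\binom{n}{\beta_1,\ldots,\beta_\ell}$ modulo $p$ is exactly the coefficient of $x_1^{\beta_1}\cdots x_\ell^{\beta_\ell}$ in the left-hand side, now regarded over $\mathbb{F}_p$.

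The engine of the proof is the Frobenius identity $(x_1+\cdots+x_\ell)^{p}\equiv x_1^p+\cdots+x_\ell^p \pmod p$, which holds because any multinomial coefficient $\binom{p}{\gamma_1,\ldots,\gamma_\ell}$ with every $\gamma_i<p$ is divisible by $p$ (the single factor of $p$ in $p!$ is not cancelled by any $\gamma_i!$), while the surviving terms are the pure powers $x_i^p$. Iterating this via an easy induction on $j$ then yields
\[ (x_1+\cdots+x_\ell)^{p^j} \equiv x_1^{p^j}+\cdots+x_\ell^{p^j} \pmod p \]
for all $j\ge 0$, since congruence modulo $p$ is equality in $\mathbb{F}_p[x_1,\ldots,x_\ell]$ and is preserved under raising to the $p$th power.

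Next I would invoke the base-$p$ expansion $n=\sum_{j=0}^k n_j p^j$ to factor the power and apply the Frobenius identity to each factor:
\[ (x_1+\cdots+x_\ell)^n = \prod_{j=0}^k\left((x_1+\cdots+x_\ell)^{p^j}\right)^{n_j} \equiv \prod_{j=0}^k \left(x_1^{p^j}+\cdots+x_\ell^{p^j}\right)^{n_j} \pmod p. \]
Expanding each factor by the multinomial theorem, a typical monomial is indexed by an array $(\gamma_{ij})$ with $\sum_{i=1}^\ell \gamma_{ij}=n_j$ for each $j$, and contributes $\prod_{j=0}^k \binom{n_j}{\gamma_{1j},\ldots,\gamma_{\ell j}}$ to the coefficient of $\prod_{i=1}^\ell x_i^{\sum_j \gamma_{ij}p^j}$. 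Extracting the coefficient of $x_1^{\beta_1}\cdots x_\ell^{\beta_\ell}$ therefore gives
\[ \binom{n}{\beta_1,\ldots,\beta_\ell} \equiv \sum_{(\gamma_{ij})} \prod_{j=0}^k \binom{n_j}{\gamma_{1j},\ldots,\gamma_{\ell j}} \pmod p, \]
where the sum runs over arrays satisfying $\sum_i \gamma_{ij}=n_j$ for all $j$ and $\sum_j \gamma_{ij}p^j=\beta_i$ for all $i$.

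The decisive step, which I expect to be the only subtle point, is to collapse this sum to a single term. Since $\sum_i \gamma_{ij}=n_j\le p-1$ and each $\gamma_{ij}\ge 0$, every entry satisfies $\gamma_{ij}\le p-1$; hence $\beta_i=\sum_j \gamma_{ij}p^j$ is a genuine base-$p$ expansion of $\beta_i$, and uniqueness of base-$p$ expansions forces $\gamma_{ij}=\beta_{ij}$ for all $i,j$. Thus there is at most one admissible array, and the right-hand side reduces to $\prod_{j=0}^k \binom{n_j}{\beta_{1j},\ldots,\beta_{\ell j}}$, as claimed. This also transparently handles carries: if $\sum_i \beta_{ij}\ne n_j$ for some $j$, then no admissible array exists, so the coefficient is $0$, which matches the right-hand side vanishing under the convention that a multinomial coefficient is $0$ whenever its lower entries fail to sum to its upper one.
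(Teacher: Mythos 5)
The paper does not actually prove this theorem: it is imported from Dickson's 1896/97 paper with a citation (\cite[p.~76]{Dickson}), so there is no internal argument to compare yours against. Your proof is correct and self-contained, and it is the natural one: the polynomial (Frobenius) proof of Lucas' theorem, generalized verbatim to $\ell$ variables. Each step checks out. The Frobenius identity $(x_1+\cdots+x_\ell)^p\equiv x_1^p+\cdots+x_\ell^p \pmod p$ holds for exactly the reason you give (the prime $p$ in the numerator of $\binom{p}{\gamma_1,\ldots,\gamma_\ell}$ survives whenever all $\gamma_i<p$), and iterating it and factoring along the base-$p$ digits of $n$ is routine. The one genuinely delicate point is the collapse of the coefficient sum to a single term, and you handle it correctly: since $\gamma_{ij}\le\sum_i\gamma_{ij}=n_j\le p-1$, every admissible array consists of legitimate base-$p$ digits, so uniqueness of base-$p$ expansions forces $\gamma_{ij}=\beta_{ij}$; and when $\sum_i\beta_{ij}\ne n_j$ for some $j$ there is no admissible array, so the coefficient vanishes, which matches the paper's convention that a multinomial coefficient is zero when its lower entries fail to sum to the upper one. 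That last observation is worth keeping explicit, because it is precisely the form of the theorem the paper uses immediately afterwards (the vanishing of $\binom{n}{\beta_1,\ldots,\beta_\ell}$ modulo $p$ unless $([\beta_1]_p,\ldots,[\beta_\ell]_p)$ is a vector composition of $[n]_p$). In short: the paper buys the result by citation; your argument supplies an elementary, complete proof of exactly the statement, at the cost of about a page.
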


It follows from Theorem~\ref{thm:Dickson} that $\binom{n}{\beta_1,\ldots,\beta_\ell}\equiv0\pmod p$ unless $\beta_{1j}+\cdots+\beta_{\ell j} = n_j$ for all $j=0,1,\ldots,k$, i.e., $([\beta_1]_p, \ldots, [\beta_\ell]_p)$ is a \emph{vector composition} of $[n]_p$ (cf. Andrews~\cite{Andrews}).

Also recall that a \emph{poset} is a set $P$ with a partial order on $P$.
A \emph{chain} in $P$ is a subset of $P$ whose elements are pairwise comparable.
The compositions of $[n]$ form a poset under $\preccurlyeq$, which is isomorphic to the poset of subsets of $[n-1]$ under inclusion via the bijection $\alpha\mapsto D(\alpha)$; this gives two incarnations of the \emph{finite Boolean algebra}.

\section{Type A}\label{sec:A}

Let $p$ be a prime number. 
In this section we determine the type $A$ composition dimension $p$-vector $c_p(n):=\left(c_{p,i}(n):i\in\mathbb{Z}_p \right)$, where $c_{p,i}(n)$ is the number of compositions $\alpha\models n$ satisfying $r_\alpha\equiv i\pmod p$ for all $i\in\mathbb{Z}_p$.

\begin{theorem}\label{thm:A}
Let $p$ be a prime and $n\ge2$ an integer with $[n]_p=(n_0,n_1,\ldots,n_k)$.
Define
\[ P := \left\{ 
b_0+b_1 p+\cdots+b_k p^k: 0\le b_j\le n_j,\ j=0,1,\ldots,k 
\right \}
\setminus \{0,n\}, \]
which is a subset of $[n-1]$ with $|P|=\prod_{j=0}^k (n_j+1) -2$.
For each $T\subseteq P$, define 
\[ 
r(T) := \sum_{ \substack{ \beta\models n,\ D(\beta) \subseteq T\\
\beta_{1j}+\cdots+\beta_{\ell j} = n_j,\ \forall j}}
(-1)^{|T|-|D(\beta)|} 
\prod_{j=0}^k \binom{n_j}{\beta_{1j}, \ldots, \beta_{\ell j}}.
\]
Here $\beta=(\beta_1,\ldots,\beta_\ell)$ is a composition of $n$ with $[\beta_i]_p = (\beta_{i0}, \beta_{i1}, \ldots, \beta_{id})$ for $i=1,\ldots,\ell$.
Then
\[ c_{p,i}(n) = 
\begin{cases}
2^{n+1-\prod_{j=0}^k (n_j+1)} \left| \{ T\subseteq P: r(T)\equiv i\pmod p \} \right| & \text{if $p=2$ or $i=0$ or $n_0=\cdots=n_{k-1}=p-1$}, \\
2^{n-\prod_{j=0}^k (n_j+1)} \left| \{ T\subseteq P: r(T)\equiv \pm i\pmod p \} \right| & \text{otherwise}.
\end{cases} \]
\end{theorem}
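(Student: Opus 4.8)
The plan is to first reduce the defining sum \eqref{eq:ribbon} for $r_\alpha$ modulo $p$ using Dickson's congruence (Theorem~\ref{thm:Dickson}), and then to organize the surviving terms according to how the descent set $D(\alpha)$ meets the set $P$. First I would apply Theorem~\ref{thm:Dickson} to each multinomial coefficient $\binom{n}{\beta}$ in \eqref{eq:ribbon}. Since $\binom{n}{\beta_1,\ldots,\beta_\ell}\equiv 0 \pmod p$ unless $\beta_{1j}+\cdots+\beta_{\ell j}=n_j$ for every $j$ (the \emph{no-carry condition}), modulo $p$ the sum collapses to
\[ r_\alpha \equiv \sum_{\substack{\beta\preccurlyeq\alpha \\ \beta_{1j}+\cdots+\beta_{\ell j}=n_j\ \forall j}} (-1)^{\ell(\alpha)-\ell(\beta)} \prod_{j=0}^k \binom{n_j}{\beta_{1j},\ldots,\beta_{\ell j}} \pmod p. \]
The key observation is that the no-carry condition forces $D(\beta)\subseteq P$: each partial sum $\sigma_i=\beta_1+\cdots+\beta_i$ has base-$p$ digit $\beta_{1j}+\cdots+\beta_{ij}\le n_j$ in position $j$ with no carrying, hence lies in $P\cup\{0,n\}$, and strict positivity places $\sigma_1,\ldots,\sigma_{\ell-1}$ in $P$.

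Next I would split $D(\alpha)=T\sqcup U$ where $T:=D(\alpha)\cap P$ and $U:=D(\alpha)\setminus P$. Because every no-carry refinement $\beta$ of $\alpha$ satisfies $D(\beta)\subseteq P$, the condition $\beta\preccurlyeq\alpha$ is equivalent to $D(\beta)\subseteq T$, so the summation range above matches that defining $r(T)$. Writing $\ell(\alpha)-\ell(\beta)=|D(\alpha)|-|D(\beta)|=|U|+\bigl(|T|-|D(\beta)|\bigr)$ pulls the factor $(-1)^{|U|}$ out of the sum, yielding the clean identity
\[ r_\alpha \equiv (-1)^{|U|}\,r(T) \pmod p. \]
Thus the residue of $r_\alpha$ depends only on $T=D(\alpha)\cap P$ through $r(T)$ and on the parity of $|U|=|D(\alpha)\cap Q|$, where $Q:=[n-1]\setminus P$ has size $|Q|=n+1-\prod_{j}(n_j+1)$.

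Finally I would count. Every $\alpha\models n$ corresponds bijectively to a pair $(T,U)$ with $T\subseteq P$ and $U\subseteq Q$, so
\[ c_{p,i}(n) = \sum_{T\subseteq P}\ \bigl|\{U\subseteq Q : (-1)^{|U|}r(T)\equiv i \pmod p\}\bigr|. \]
When $p=2$ the sign $(-1)^{|U|}$ is trivial and each $U$ is free, giving the factor $2^{|Q|}=2^{n+1-\prod_j(n_j+1)}$; the same factor appears when $i=0$, since then only the subsets $T$ with $r(T)\equiv 0$ contribute and the sign is irrelevant. Otherwise, for $p$ odd and $i\ne 0$, a fixed $T$ contributes only when $r(T)\equiv\pm i$, and then exactly the even-sized (if $r(T)\equiv i$) or the odd-sized (if $r(T)\equiv-i$) subsets $U$ work; provided $|Q|\ge 1$ each of these counts is $2^{|Q|-1}=2^{n-\prod_j(n_j+1)}$, and since $i\not\equiv-i\pmod p$ the classes $i$ and $-i$ are distinct, combining to give $2^{|Q|-1}\,|\{T:r(T)\equiv\pm i\}|$. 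The degenerate case $|Q|=0$, where the even/odd split fails, is exactly the condition $n_0=\cdots=n_{k-1}=p-1$ (a short base-$p$ computation shows $\prod_j(n_j+1)=n+1$ precisely in this case); here $U=\varnothing$ forces the trivial sign and the first formula again applies.

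The main obstacle I anticipate is the bookkeeping in the second step: carefully verifying that the no-carry condition is equivalent to having $D(\beta)\subseteq P$, that $\beta\preccurlyeq\alpha$ then reduces to $D(\beta)\subseteq T$, and that the sign discrepancy between $(-1)^{\ell(\alpha)-\ell(\beta)}$ and the $(-1)^{|T|-|D(\beta)|}$ appearing in $r(T)$ is accounted for by precisely the factor $(-1)^{|U|}$. Once the identity $r_\alpha\equiv(-1)^{|U|}r(T)$ is secured, the remainder is a routine even/odd subset count, with the only genuine care needed at the boundary $|Q|=0$.
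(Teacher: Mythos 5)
Your proposal is correct and follows essentially the same route as the paper's proof: reduce \eqref{eq:ribbon} modulo $p$ via Theorem~\ref{thm:Dickson}, observe that the no-carry condition forces $D(\beta)\subseteq P$, split $D(\alpha)$ into its parts inside and outside $P$, and count by the parity of the outside part. You are in fact somewhat more explicit than the paper about the sign $(-1)^{|U|}$ (which the paper absorbs into the phrase ``half of which have even lengths'') and about the degenerate case $\prod_{j}(n_j+1)=n+1$, but the underlying argument is identical.
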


\begin{proof}
Let $\alpha$ be an arbitrary composition of $n$, whose corresponding binary string is $a=a_1\cdots a_n$ with $a_n=1$.
We use the formula~\eqref{eq:ribbon} for the ribbon number $r_\alpha$ to reduce it modulo $p$.
We have $\beta \preccurlyeq \alpha$ if and only if $a_r=1$ for all $r$ in
\[ 
D(\beta) = \left\{ \sum_{i=1}^s \sum_{j=0}^k \beta_{ij}p^j: s=1,\ldots\ell-1 \right\}.
\]
If $\binom{n}{\beta}\not\equiv 0\pmod p$ then $\beta_{1j}+\cdots+\beta_{\ell j} = n_j$ for all $j=0,1,\ldots,k$ by Theorem~\ref{thm:Dickson}, 
and this implies $D(\beta)\subseteq P$.
Thus to find which compositions $\beta$ with $\binom{n}{\beta}\not\equiv 0\pmod p$ are refined by $\alpha$, it suffices to look at the substring $\hat a := (a_r: r\in P)$ of $a$.
It is easy to see that $P$ is a subset of $[n-1]$ with $|P|=(n_0+1)\cdots(n_k+1)-2$.

Let $b$ be a fixed binary string indexed by $P$ with $\supp(b) := \{r\in P: b_r=1\}=T$. 
If $\hat a=b$, then  
\[
r_\alpha \equiv \sum_{ \substack{ \beta\models n \\ 
D(\beta) \subseteq T } } 
(-1)^{\ell(\alpha)-\ell(\beta)} \prod_{j=0}^k \binom{n_j}{\beta_{1j}, \ldots, \beta_{\ell j}}
\equiv r(T) \pmod p,
\]
and we have exactly $2^{n-|P|-1}$ possibilities for $\alpha$, half of which have even lengths by toggling $a_j$ for some $j\in[n-1]\setminus P$ unless $P=[n-1]$, i.e., $n_0=n_1=\cdots=n_{k-1}=p-1$.
The result follows.
\end{proof}

We derive some consequences of Theorem~\ref{thm:A} below.

\begin{corollary}\label{cor:A}
Let $p$ be a prime and $n\ge2$ an integer with $[n]_p=(n_0,\ldots,n_k)$.
For all $i\in\mathbb{Z}_p$, we have $c_{p,i}(n)=c_{p,-i}(n)$ unless $n_0=\cdots=n_{k-1}=p-1$ and 
\[\begin{cases}
2^{n+2-(n_0+1)\cdots(n_k+1)} \text{ divides } c_{p,i}(n) , & \text{if $p=2$ or $i=0$ or $n_0=\cdots=n_{k-1}=p-1$}, \\
2^{n+1-(n_0+1)\cdots(n_k+1)} \text{ divides } c_{p,i}(n) , & \text{otherwise}.
\end{cases} \]
\end{corollary}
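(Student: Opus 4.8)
The plan is to deduce both assertions directly from Theorem~\ref{thm:A} by exhibiting two involutions. The symmetry $c_{p,i}(n)=c_{p,-i}(n)$ should fall out of the case structure of the theorem itself, while the divisibility amounts to gaining one extra factor of $2$ beyond the prefactor already recorded in Theorem~\ref{thm:A}; I intend to produce that factor from a fixed-point-free involution on the subsets $T\subseteq P$.

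For the symmetry I would simply read off Theorem~\ref{thm:A}. In the ``otherwise'' branch we have $c_{p,i}(n)=2^{\,n-\prod_{j=0}^{k}(n_j+1)}\,|\{T\subseteq P:r(T)\equiv\pm i\}|$, and the condition $r(T)\equiv\pm i$ is unchanged when $i$ is replaced by $-i$ (and $-i$ also falls in the ``otherwise'' branch, since the branch condition is independent of $i$ apart from the clause $i=0$), so $c_{p,i}(n)=c_{p,-i}(n)$ there. In the first branch, if $p=2$ or $i=0$ then $-i\equiv i\pmod p$ and the equality is immediate; the only remaining first-branch scenario is $n_0=\cdots=n_{k-1}=p-1$ with $p$ odd and $i\ne0$, which is exactly the case excluded in the statement. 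Equivalently, one may toggle a coordinate $a_q$ with $q\in[n-1]\setminus P$ (which exists precisely when $P\ne[n-1]$); by the sign identity $r_\alpha\equiv(-1)^{|D(\alpha)\setminus P|}r(T)\pmod p$ underlying the proof of Theorem~\ref{thm:A}, this toggle is a bijection from $\{\alpha:r_\alpha\equiv i\}$ onto $\{\alpha:r_\alpha\equiv-i\}$.

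For the divisibility, the key step is to introduce the complementation map $\iota\colon T\mapsto P\setminus T$ on subsets of $P$ and compute how $r$ transforms under it. Combining the ribbon symmetry $r_\alpha=r_{\alpha^c}$ from Section~\ref{sec:prelim}, where $D(\alpha^c)=[n-1]\setminus D(\alpha)$, with the sign identity above and the relation $|D(\alpha^c)\setminus P|=(n-1-|P|)-|D(\alpha)\setminus P|$, I expect to obtain
\[ r(P\setminus T)\equiv(-1)^{\,n-1-|P|}\,r(T)\pmod p. \]
Granting this, I would verify branch by branch that $\iota$ stabilizes the residue set counted in Theorem~\ref{thm:A}: in the first branch either $p=2$, or $i=0$, or $n_0=\cdots=n_{k-1}=p-1$ (in which case $|P|=n-1$, so the exponent $n-1-|P|$ vanishes), and each of these forces the sign factor to act trivially on the condition $r(T)\equiv i$; in the ``otherwise'' branch the set $\{r(T)\equiv\pm i\}$ is visibly stable under multiplying $r(T)$ by $\pm1$. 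Since $\iota$ has no fixed point as soon as $|P|\ge1$, the relevant count $|\{T:r(T)\equiv i\}|$ (respectively $|\{T:r(T)\equiv\pm i\}|$) is even, and multiplying the prefactor of Theorem~\ref{thm:A} by this extra $2$ yields the claimed powers $2^{\,n+2-\prod_{j=0}^{k}(n_j+1)}$ and $2^{\,n+1-\prod_{j=0}^{k}(n_j+1)}$.

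I expect the main obstacle to be twofold. First, pinning down the precise sign in $r(P\setminus T)\equiv\pm r(T)$ and checking its compatibility with every branch: the bookkeeping of $|D(\alpha)\setminus P|$ against $|D(\alpha^c)\setminus P|$ must be carried out carefully so that $\iota$ demonstrably preserves the counted residue class rather than moving it (mod $2$ the sign is invisible, but for odd $p$ it is essential). Second, the degenerate case $|P|=0$, i.e. $n$ a power of $p$: here $\iota$ is the identity, $r(\emptyset)\equiv1$, and the extra factor of $2$ genuinely disappears, so this boundary situation must be isolated and handled separately before the involution argument can be invoked.
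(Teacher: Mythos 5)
Your proposal takes essentially the same route as the paper's own proof: the symmetry $c_{p,i}(n)=c_{p,-i}(n)$ is read off from the case structure of Theorem~\ref{thm:A}, and the divisibility comes from the complementation involution $T\mapsto P\setminus T$ combined with the ribbon symmetry $r_I^S=r_{S\setminus I}^S$. The difference is that you are more careful than the paper on precisely the two points you flag as obstacles, and both are genuine. On the sign: the paper asserts $r(T)\equiv r(P\setminus T)\pmod p$ with no sign, whereas the correct relation is the one you predict,
\[
r(P\setminus T)\equiv(-1)^{\,n-1-|P|}\,r(T)\pmod p,
\]
and the sign really occurs: for $n=4$, $p=3$ one has $P=\{1,3\}$, $r(\emptyset)=1$, $r(P)=-1$. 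As you say, this is harmless in every branch actually used ($p=2$ makes it invisible; $i=0$ is negation-stable; $n_0=\cdots=n_{k-1}=p-1$ forces $|P|=n-1$ and kills the exponent; the $\pm i$ count is negation-stable), but that branch-by-branch verification is a necessary step which the paper's proof omits.

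On the boundary case $P=\emptyset$, i.e.\ $n=p^d$ with $d\ge1$: your diagnosis that the extra factor of $2$ ``genuinely disappears'' is correct, but this case cannot be rescued by a separate argument, because the stated divisibility is simply false there. By Corollary~\ref{cor:PowerA}, $c_{p,\pm1}(p^d)=2^{p^d-2}$ for odd $p$ and $c_{2,1}(2^d)=2^{2^d-1}$, while Corollary~\ref{cor:A} asserts divisibility by $2^{p^d-1}$, respectively $2^{2^d}$. So the corollary needs the additional hypothesis that $n$ is not a power of $p$ (equivalently $P\ne\emptyset$, which is exactly what makes your involution fixed-point-free); the paper's proof silently ignores both this fixed point and the sign. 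With that hypothesis added, your argument is a complete proof, and in fact a more rigorous one than the paper's.
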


\begin{proof}
Theorem~\ref{thm:A} immediately implies that $c_{p,i}(n)=c_{p,-i}(n)$ for all $i\in\mathbb{Z}_p$ unless $n_0=\cdots=n_{k-1}=p-1$.
The symmetry $r_I^S=r_{S\setminus I}^S$ for the ribbon numbers of a finite Coxeter system $(W,S)$ mentioned in Section~\ref{sec:prelim} implies $r(T) \equiv r(P\setminus T) \pmod p$ for all $T\subseteq P$.
Thus $c_{p,i}(n)$ is divisible by the desired power of $2$.
\end{proof}

Next, we specialize Theorem~\ref{thm:A} to the case when $n$ is a multiple of a prime power.

\begin{corollary}\label{cor:PowerA}
Suppose $n=mp^d$, where $p$ is a prime, $m\in\{1,\ldots,p-1\}$, and $d\ge0$ is an integer.
Then
\[ c_{p,i}(n) =
\begin{cases}
2^{n-m} |\{\gamma\models m: r_\gamma \equiv i \pmod p\}| & \text{if $i=0$ or $p=2$ or $d=0$}, \\
2^{n-m-1} |\{\gamma\models m: r_\gamma \equiv \pm i \pmod p\}| & \text{otherwise}.
\end{cases}\]
\end{corollary}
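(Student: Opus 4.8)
The plan is to derive Corollary~\ref{cor:PowerA} as a direct specialization of Theorem~\ref{thm:A} to the case $n = mp^d$ with $m \in \{1,\ldots,p-1\}$. First I would compute the base-$p$ expansion: since $m < p$, we have $[n]_p = (0,\ldots,0,m)$ with $d$ trailing zeros followed by the single nonzero digit $n_d = m$. Thus $n_0 = \cdots = n_{d-1} = 0$ and $n_d = m$, giving $\prod_{j=0}^d (n_j+1) = m+1$ and hence $|P| = m+1-2 = m-1$. Plugging into the prefactors from Theorem~\ref{thm:A}, the exponent $n+1 - \prod(n_j+1) = n+1-(m+1) = n-m$ matches the claimed $2^{n-m}$, and the ``otherwise'' exponent $n - (m+1) = n-m-1$ matches $2^{n-m-1}$, so the powers of $2$ are already correct.

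**Reducing the branch condition and identifying $r(T)$.**

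Next I would verify the branch conditions align. The special clause in Theorem~\ref{thm:A} triggers when $p=2$, or $i=0$, or $n_0 = \cdots = n_{k-1} = p-1$. Here the ``all lower digits equal $p-1$'' condition becomes $n_0 = \cdots = n_{d-1} = p-1$; but these digits are all $0$, so this holds precisely when $d=0$ (the empty condition) and fails otherwise since $0 \neq p-1$. This is exactly the stated trichotomy ``$i=0$ or $p=2$ or $d=0$.'' The crux is then to show that $\{T \subseteq P : r(T) \equiv i \pmod p\}$ is in size-preserving correspondence with $\{\gamma \models m : r_\gamma \equiv i \pmod p\}$. Since only the single digit $n_d = m$ is nonzero, the constraint $\beta_{1j}+\cdots+\beta_{\ell j}=n_j$ forces $\beta_{ij}=0$ for all $j \neq d$, so each $\beta_i$ is a multiple of $p^d$, say $\beta_i = \gamma_i p^d$ with $(\gamma_1,\ldots,\gamma_\ell) \models m$. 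The multinomial product in $r(T)$ collapses to the single factor $\binom{m}{\gamma_1,\ldots,\gamma_\ell}$, and $D(\beta) = \{(\gamma_1+\cdots+\gamma_s)p^d\}$ so $T \subseteq P$ corresponds bijectively to a subset of descents of a composition of $m$ scaled by $p^d$.

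**The core identification.**

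The key step is recognizing that under the scaling $\beta \leftrightarrow \gamma$ by the factor $p^d$, the definition of $r(T)$ in Theorem~\ref{thm:A} becomes literally the formula~\eqref{eq:ribbon} for the type $A$ ribbon number $r_\gamma$ of a composition $\gamma \models m$, with $T$ playing the role of $D(\alpha)$ rescaled. More precisely, I would set up the bijection $P \to [m-1]$ sending $b_0+\cdots+b_k p^k \in P$ (which must equal $b_d p^d$ with $0 \le b_d \le m$, excluding $0$ and $m p^d$) to $b_d \in [m-1]$; under this, subsets $T \subseteq P$ correspond to subsets of $[m-1]$, i.e., to compositions $\gamma \models m$ via $T \leftrightarrow D(\gamma)$. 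Then $r(T) \equiv r_\gamma \pmod p$ because the sum defining $r(T)$ is exactly the alternating multinomial sum computing $r_\gamma$ modulo $p$ (every discarded term has a multinomial coefficient $\equiv 0$). Substituting $|\{T : r(T) \equiv i\}| = |\{\gamma \models m : r_\gamma \equiv i\}|$ into Theorem~\ref{thm:A} yields the result.

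**Anticipated obstacle.**

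The main obstacle is bookkeeping rather than conceptual: carefully checking that the $\pm i$ in the ``otherwise'' branch of Theorem~\ref{thm:A} transfers correctly to $r_\gamma \equiv \pm i$ here, and confirming the bijection $T \leftrightarrow D(\gamma)$ respects the sign $(-1)^{|T|-|D(\beta)|}$ so that $r(T)$ genuinely equals the ribbon-number sum and not merely something congruent up to sign. I expect this to be routine once the scaling $\beta_i = \gamma_i p^d$ is fixed, since the length statistics satisfy $|T| \leftrightarrow |D(\gamma)|$ and $|D(\beta)| = \ell(\beta)-1 = \ell(\gamma)-1$ exactly, making the signs match term by term.
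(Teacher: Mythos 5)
Your proposal is correct and follows essentially the same route as the paper's proof: specialize Theorem~\ref{thm:A}, note $[n]_p=(0,\ldots,0,m)$ so that $\prod_j(n_j+1)=m+1$ and $P=\{jp^d: 1\le j\le m-1\}$, identify each $T\subseteq P$ with $D(\gamma)$ for a composition $\gamma\models m$ by dividing by $p^d$, and observe that the sum defining $r(T)$ collapses (since every part of an admissible $\beta$ is $\delta_i p^d$ with $\delta_i<p$) to the alternating sum~\eqref{eq:ribbon} computing $r_\gamma$ --- in fact an exact equality, not merely a congruence, because no terms are discarded. The only blemish is a notational clash (you use $\gamma$ both for the rescaled summation index $\beta/p^d$ and for the composition corresponding to $T$), but the sign bookkeeping $(-1)^{|T|-|D(\beta)|}=(-1)^{\ell(\gamma)-\ell(\delta)}$ goes through exactly as you anticipate, and your analysis of the branch condition ($n_0=\cdots=n_{k-1}=p-1$ holding vacuously precisely when $d=0$) matches the paper.
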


\begin{proof}
By Theorem~\ref{thm:A}, we have $[n]_p=(0,\ldots,0,m)$, $P=\{jp^d: j=1,2,\ldots,m-1\}$, and each $T\subseteq P$ corresponds to a composition $\gamma\models m$ with $D(\gamma)=\{t/p^d: t\in T\}$ such that
\begin{align*}
r(T) & = \sum_{ \substack{ \beta\models n \\ D(\beta) \subseteq T } } 
(-1)^{|T|-|D(\beta)|} \prod_{j=0}^d \binom{n_j}{\beta_{1j}, \ldots, \beta_{\ell j}} \\
&= \sum_{ \delta\preccurlyeq \gamma } 
(-1)^{\ell(\gamma)-\ell (\delta)} \binom{m}{\delta} = r_\gamma, \\
\end{align*}
where $\delta$ is obtained from $\beta$ by dividing each part by $p^d$.
The result follows.
\end{proof}

\begin{example}
Corollary~\ref{cor:PowerA} becomes trivial when $d=0$. 
Assume $d>0$ below.
For $m=1,2,3,4$, we compute $r_\gamma$ for all $\gamma\models m$:
\begin{itemize}
\item
$r_{(1)}=1$, $r_{(1,1)}=r_{(2)}=1$, $r_{(1,1,1)}=r_{(3)}=1$, $r_{(1,2)} = r_{(2,1)} = 2$,
\item
$r_{(1,1,1,1)}=r_{(4)}=1$, $r_{(1,1,2)}=r_{(2,1,1)}=r_{(1,3)}=r_{(3,1)}=3$, $r_{(1,2,1)}=r_{(2,2)}=5$.
\end{itemize}
Thus we have the following by Corollary~\ref{cor:PowerA}.  
\begin{itemize}
\item
If $n=p^d$ then $c_p(n)=(0,2^{n-1})$ when $p=2$ and $c_p(n)=(0,2^{n-2},0,\ldots,0,2^{n-2})$ when $p>2$.
\item
If $n=2p^d$ and $p>2$ then $c_{p,\pm1}(n) = 2^{n-2}$ and $c_{p,i}(n)=0$ for all $i\not \equiv \pm 1\pmod p$.
\item
If $n=3p^d$ and $p>3$ then $c_{p,\pm 1}(n) = c_{p,\pm3}(n) = 2^{n-3}$ and $c_{p,i}(n)=0$ for all $i\not\equiv \pm1,\pm3 \pmod p$.
\item
If $n=4p^d$ then $c_p(n)=2^{n-4}(2,1,2,2,1)$ when $p=5$ and $c_{p,\pm1}(n) = c_{p,\pm5}(n) = 2^{n-4}$, $c_{p,\pm3}(n) =2^{n-3}$, and $c_{p,i}(n)=0$ for all $i\not\equiv \pm1, \pm3, \pm5 \pmod p$ when $p>5$.
\end{itemize}
\end{example}

Next, we consider the case when $n$ is a sum of distinct powers of a prime $p$; this applies to all values of $n$ when $p=2$.

\begin{corollary}\label{cor:PowersA}
Let $p$ be a prime and $n=p^{d_1}+\cdots+p^{d_k}$, where $0\le d_1<\cdots<d_k$ and $k>1$.
Define 
\[ 
P:= \left\{ \overline U: \emptyset\ne U\subsetneqq \{p^{d_1}, \ldots, p^{d_k} \} \right\}, 
\quad \text{partially ordered by } \preccurlyeq,
\] 
where $\overline U$ denotes the sum of all elements of $U$ and $\overline U \preccurlyeq \overline V$ in the poset $P$ if and only if $U\subseteq V$.
Then
\[ c_{p,i}(n) = 
\begin{cases}
2^{n-2^k+1}|\{ T\subseteq P: \chi(T) \equiv i \pmod p \}|, & \text{if  $p=2$ or $i=0$} \\
2^{n-2^k}|\{ T\subseteq P: \chi(T) \equiv \pm i \pmod p \}|, & \text{if $p>2$ and $i=1,\ldots,p-1$}.
\end{cases} \]
Here $\chi(T)$ is the number of chains of even cardinality minus the number of chains of odd cardinality in $T$. 
\end{corollary}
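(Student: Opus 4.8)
The plan is to specialize Theorem~\ref{thm:A} to $n=p^{d_1}+\cdots+p^{d_k}$ and to identify the quantity $r(T)$ with a signed count of chains. First I would record that here $[n]_p$ has digit $1$ in the $k$ positions $d_1,\ldots,d_k$ and $0$ elsewhere, so every base-$p$ digit of $n$ lies in $\{0,1\}$ and the product $\prod_j(n_j+1)$ appearing in Theorem~\ref{thm:A} equals $2^k$. Consequently $|P|=2^k-2$, and the two prefactors $2^{\,n+1-\prod_j(n_j+1)}$ and $2^{\,n-\prod_j(n_j+1)}$ of Theorem~\ref{thm:A} become $2^{\,n+1-2^k}$ and $2^{\,n-2^k}$. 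Moreover, since $b_j\in\{0,n_j\}$ for each $j$, the set $P$ of Theorem~\ref{thm:A} consists exactly of the subset sums $\overline U$ with $\emptyset\ne U\subsetneqq\{p^{d_1},\ldots,p^{d_k}\}$, matching the $P$ in the statement; distinctness of the powers makes $U\mapsto\overline U$ injective, so the poset $(P,\preccurlyeq)$ is well defined.

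The heart of the argument is to evaluate $r(T)$. I would first observe that the digit constraint $\beta_{1j}+\cdots+\beta_{\ell j}=n_j$ with $n_j\in\{0,1\}$ forces every $\beta_{ij}\in\{0,1\}$ and assigns each power $p^{d_m}$ to exactly one part. Thus the valid compositions $\beta$ are precisely the ordered set partitions $(B_1,\ldots,B_\ell)$ of $\{p^{d_1},\ldots,p^{d_k}\}$ into nonempty blocks, with $\beta_i=\overline{B_i}$, and each multinomial factor $\binom{n_j}{\beta_{1j},\ldots,\beta_{\ell j}}$ equals $1$, so the whole product is $1$. Next I would note that the partial sums give $D(\beta)=\{\overline{B_1\cup\cdots\cup B_s}:1\le s\le\ell-1\}$, a strictly increasing (under inclusion) sequence of proper nonempty subsets, hence a chain in $P$; conversely a chain $\overline{W_1}\prec\cdots\prec\overline{W_m}$ in $P$ recovers $\beta$ via successive differences $B_i=W_i\setminus W_{i-1}$, with $B_{m+1}$ the complement of $W_m$. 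This sets up a bijection between valid $\beta$ and chains in $P$, under which $D(\beta)\subseteq T$ becomes ``the chain lies in $T$'' and $|D(\beta)|$ equals the cardinality of the chain. Substituting into the definition of $r(T)$ then yields
\[
r(T)=\sum_{\substack{C\subseteq T\\ C\text{ a chain in }P}}(-1)^{|T|-|C|}=(-1)^{|T|}\chi(T),
\]
since $\chi(T)=\sum_{C}(-1)^{|C|}$, the empty chain (coming from $\beta=(n)$) being included and even.

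Finally I would feed $r(T)=(-1)^{|T|}\chi(T)$ back into Theorem~\ref{thm:A} and argue that the sign $(-1)^{|T|}$ is invisible to every count that appears. For $p=2$ it is $\equiv1$, so $r(T)\equiv\chi(T)$; for $i=0$ we have $r(T)\equiv0$ iff $\chi(T)\equiv0$; and otherwise the set $\{T:r(T)\equiv\pm i\}$ coincides with $\{T:\chi(T)\equiv\pm i\}$ because $\{+i,-i\}$ is stable under negation. I would also check that the third alternative of Theorem~\ref{thm:A}, namely that all base-$p$ digits of $n$ below the top one equal $p-1$, cannot occur here for $p>2$ (the digits lie in $\{0,1\}$ while $p-1\ge2$) and for $p=2$ is subsumed by the ``$p=2$'' branch. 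Hence the two branches of Theorem~\ref{thm:A} collapse exactly to the two branches in the statement, giving the claimed formula.

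I expect the main obstacle to be the bookkeeping in the bijection between valid compositions and chains: making sure the digit-sum condition is genuinely equivalent to ``$C$ is a chain'' (so that non-chains, such as the all-ones composition, are correctly excluded), and then verifying that the sign $(-1)^{|T|}$ — which blocks the cleaner identity $r(T)=\chi(T)$ — really washes out in \emph{each} case of the count rather than only up to an unanalyzed discrepancy.
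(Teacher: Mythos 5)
Your proposal is correct and follows essentially the same route as the paper's proof: specialize Theorem~\ref{thm:A}, note that the base-$p$ digit constraints force the surviving compositions to be ordered set partitions of $\{p^{d_1},\ldots,p^{d_k}\}$ with every multinomial factor equal to $1$, identify $D(\beta)$ with chains in $T$ to obtain $r(T)=(-1)^{|T|}\chi(T)$, and check that the sign $(-1)^{|T|}$ is invisible in each branch of the count. Your handling of the exceptional branch is in fact slightly more careful than the paper's parenthetical remark, since for $p=2$ the condition $n_0=\cdots=n_{k-1}=p-1$ \emph{can} occur (e.g.\ $n=2^k-1$) but is harmlessly absorbed into the $p=2$ branch, exactly as you observe.
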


\begin{proof}
By Theorem~\ref{thm:A}, we have $P= \left\{ \overline U: \emptyset\ne U\subsetneqq \{p^{d_1}, \ldots, p^{d_k} \} \right\} $ with $|P|=2^k-2$, and for every $T\subseteq P$, 
\[
r(T) = (-1)^{|T|} \chi(T) 
\]
since a composition $\beta\models n$ satisfies $D(\beta)\subseteq T$ if and only if $D(\beta)$ gives a chain of cardinality $|D(\beta)|$ in $T$ and the multinomial coefficients in the definition of $r(T)$ are all ones for $n=p^{d_1}+\cdots+p^{d_k}$.
The result then follows from Theorem~\ref{thm:A} (the case $n_0=\cdots=n_{k-1}=p-1$ does not occur since $k>1$).
\end{proof}

We give an example when $n$ is a sum of two distinct powers of a prime $p$.

\begin{example}
Let $p$ be a prime and $n=u+v$, where $u$ and $v$ are distinct nonnegative powers of $p$.
By Corollary~\ref{cor:PowersA}, we have $P=\{u,v\}$, $\chi(\emptyset)=1-0=1$, $\chi(\{u\})=\chi(\{v\})=1-1=0$, $\chi(\{u,v\})=1-2=-1$, and thus the following holds.
\begin{itemize}
\item
If $p=2$ then $c_{p,0}(n)=c_{p,1}(n)=2^{n-2}$.
\item
If $p>2$ then $c_{p,0}(n)=2^{n-2}$, $c_{p,\pm1}(n)=2^{n-3}$, and $c_{p,i}(n)=0$ for all $i\not\equiv 0, \pm1 \pmod p$.
\end{itemize}
We also provide a direct proof here to help illustrate our method. Theorem~\ref{thm:Dickson} implies that 
\[ \binom{n}{\beta} \equiv 
\begin{cases}
1 \pmod p & \text{if } \beta\in\{ (n), (u,v), (v,u)\}; \\
0 & \text{otherwise}
\end{cases} \]
for every composition $\beta\models n$.
Let $\alpha$ be a composition of $n$ corresponding to a binary string $a_1\cdots a_n$.
Then
\[ r_\alpha \equiv 
\begin{cases}
(-1)^{\ell(\alpha)} \pmod p & \text{if } (u,v) \preccurlyeq \alpha \text{ and } (v,u) \preccurlyeq \alpha, \text{ i.e., } a_u = a_v = 1, \\
(-1)^{\ell(\alpha)-1} \pmod p & \text{if } (u,v) \not\preccurlyeq \alpha \text{ and } (v,u) \not\preccurlyeq \alpha, \text{ i.e., } a_v = a_u = 0, \\
0 & \text{otherwise}.
\end{cases} \]
There are exactly $2^{n-3}$ possibilities for $\alpha$ in either the first or the second case.
There are $2^{n-2}$ possibilities for $\alpha$ in the last case, half with even lengths in each case by the bijection toggling $a_j$ for some $j\in[n-1]\setminus P$.
The result follows.
\end{example}

We have another example when $n$ is the sum of three distinct powers of a prime $p$.

\begin{example}
Suppose $n=u+v+w$, where $u,v,w$ are distinct nonnegative powers of $p$.
By Corollary~\ref{cor:PowersA}, we need to calculate $\chi(T)$ for each $T\subseteq P:=\{u,v,w,u+v,u+w,v+w\}$.
We distinguish some cases below.
\begin{itemize}
\item
If $|T|=0$ then $\chi(T)=1-0=1$; the number of possibilities for $T$ is $1$.
\item
If $|T|=1$ then $\chi(T)=1-1=0$; the number of possibilities for $T$ is $6$.
\item
If $T$ consists of two comparable elements, then $\chi(T)=2-2=0$; the number of possibilities for $T$ is $3\cdot 2 = 6$.
\item
If $T$ consists of two incomparable elements, then $\chi(T)=1-2=-1$; the number of possibilities for $T$ is $\binom{6}{2}-6=9$.
\item
If $T$ is of the form $\{u,v,u+v\}$ or $\{u,u+v,u+w\}$, then $\chi(T)=3-3=0$; the number of possibilities for $T$ is $3+3=6$.
\item
If $T$ is for the form $\{u,v,u+w\}$ or $\{u,u+v,v+w\}$, then $\chi(T)=2-3=-1$;
the number of possibilities for $T$ is $3\cdot 2+3\cdot 2=12$.
\item
If $T$ is of the form $\{u,v,w\}$ or $\{u+v,u+w,v+w\}$, then $\chi(T)=1-3=-2$; the number of possibilities for $T$ is $2$.
\item
If $T$ is of the form $\{u,v,w,u+v\}$, $\{u,u+v,u+w,v+w\}$, or $\{u,v,u+w,v+w\}$, then $\chi(T)=3-4=-1$;
the number of possibilities for $T$ is $3+3+3=9$.
\item
If $T$ is of the form $\{u,v,u+v,u+w\}$, then $\chi(T)=4-4=0$; the number of possibilities for $T$ is $3\cdot 2=6$.
\item
If $|T|=5$, then $\chi(T)=5-5=0$; the number of possibilities for $T$ is $6$.
\item
If $|T|=6$, then $\chi(T)=7-6=1$; the number of possibilities for $T$ is $1$.
\end{itemize}
Thus by Corollary~\ref{cor:PowersA}, we have 
\begin{itemize}
\item
$c_{2,0}(n)=2^{n-7}(6+6+6+2+6+6)=32\cdot 2^{n-7}=2^{n-2}$, 
\item
$c_{2,1}(n)=2^{n-7}(1+9+12+9+1)=32\cdot 2^{n-7}=2^{n-2}$,
\item
$c_{p,0}(n)=2^{n-7}(6+6+6+6+6)=30\cdot 2^{n-7}$ if $p>2$,
\item
$c_{p,\pm1}(n)=2^{n-8}(1+9+12+2+9+1)=17\cdot 2^{n-7}$ if $p=3$,
\item
$c_{p,\pm1}(n)=2^{n-8}(1+9+12+9+1)=16\cdot 2^{n-7}$ if $p>3$,
\item
$c_{p,\pm2}(n)=2^{n-8}\cdot2=2^{n-7}$ if $p>3$. \qedhere
\end{itemize}
\end{example}

The next example settles the case when $n$ is a sum of four distinct powers of $p$.

\begin{example}
Computations in Sage based on Corollary~\ref{cor:PowersA} show that if $n$ is the sum of four distinct nonnegative powers of $p$ then 
\[ c_p(n)=
\begin{cases}
2^{n-15}(8960, 7424)=2^{n-7}(35,29), & \text{if } p = 2, \\
2^{n-15}(7766, 4309, 4309), & \text{if } p = 3, \\
2^{n-15}(7606, 3636, 753, 753, 3636), & \text{if } p = 5, \\
2^{n-15}(7604, 3630, 673, 87, 87, 673, 3630), & \text{if } p=7,\\
2^{n-15}(7604, 3630, 672, 81, 6, 1, 0,\ldots,0, 1, 6, 81, 672, 3630), & \text{if } p\ge 11.
\end{cases} \]
\end{example}

Next, we study the case when $n$ is two times a power of an odd prime $p$ plus a different power of $p$.

\begin{corollary}
Let $p$ be an odd prime and $n=2p^d + p^e$, where $d$ and $e$ are distinct nonnegative integers.
\begin{itemize}
\item
If $p=3$ and $n=5$ then $c_{p,0}(n)=6$, $c_p^1(n)=8$, and $c_p^2(n)=2$.
\item
If $p=3$ and $n>5$ then $c_{p,0}(n)=6\cdot 2^{n-5}$ and $c_{p,\pm1}(n)=5\cdot 2^{n-5}$.
\item
If $p>3$ then $c_{p,0}(n) = 6\cdot 2^{n-5}$, $c_{p,\pm1}(n) = 4\cdot 2^{n-5}$, $c_{p,\pm2}(n) = 2^{n-5}$, and $c_{p,i}(n) = 0$ for all $i\not\equiv 0,\pm1,\pm2 \pmod p$.
\end{itemize}
\end{corollary}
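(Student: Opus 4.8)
The plan is to instantiate Theorem~\ref{thm:A}. Since $p$ is odd we have $2<p$, so $n=2p^d+p^e$ involves no carrying, and $[n]_p$ has the digit $2$ in position $d$, the digit $1$ in position $e$, and $0$ everywhere else. First I would record the two pieces of data that Theorem~\ref{thm:A} needs: $\prod_{j=0}^{k}(n_j+1)=3\cdot 2=6$, so that $|P|=4$, and explicitly
\[ P = \{\, p^d,\ 2p^d,\ p^e,\ p^d+p^e \,\} \subseteq [n-1]. \]

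Next I would enumerate the compositions $\beta\models n$ with $\binom{n}{\beta}\not\equiv 0\pmod p$. By Theorem~\ref{thm:Dickson} these are exactly the vector compositions of $[n]_p$, i.e.\ the ordered arrangements of the ``tokens'' $p^d,p^d,p^e$ into parts; there are eight of them (the single part $(n)$, four two-part compositions, and three three-part compositions). For each I would record its descent set, which lands in $P$, together with the reduced multinomial $\prod_j\binom{n_j}{\beta_{1j},\dots,\beta_{\ell j}}\bmod p$; the latter equals $2$ exactly when the two copies of $p^d$ are separated into different parts and equals $1$ otherwise.

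Then I would compute $r(T)$ for all $16$ subsets $T\subseteq P$ from the definition in Theorem~\ref{thm:A}, rewriting it as $r(T)=(-1)^{|T|}\sum_{D(\beta)\subseteq T}(-1)^{|D(\beta)|}\binom{n}{\beta}$ and halving the work with the symmetry $r(T)\equiv r(P\setminus T)\pmod p$ from Corollary~\ref{cor:A}. The outcome I expect is that, as integers, the multiset $\{\, r(T):T\subseteq P \,\}$ equals $\{1^{6},\,0^{6},\,(-1)^{2},\,(-2)^{2}\}$: six subsets give $1$, six give $0$, two give $-1$, and two give $-2$.

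Finally I would feed these counts into Theorem~\ref{thm:A}, sorting the residues modulo $p$. The one delicate point---and the source of the three separate bullets---is the hypothesis $n_0=\cdots=n_{k-1}=p-1$ that toggles the power of $2$ and switches between $\equiv i$ and $\equiv\pm i$. I would check that this hypothesis holds if and only if $p=3$ and $n=5$ (equivalently $[n]_3=(2,1)$ with top index $k=1$); in that case the first branch of Theorem~\ref{thm:A} applies for every $i$ and $c_{3,i}(5)=|\{T:r(T)\equiv i\pmod 3\}|$, giving $(6,8,2)$ directly. For $p=3$ and $n>5$ the hypothesis fails, so the $\pm i$ branch governs $i\ne 0$; since $-2\equiv 1$ and $-1\equiv 2\pmod 3$, the multiset above regroups into $6$ subsets with $r\equiv 0$ and $10$ with $r\equiv\pm 1$, producing $6\cdot 2^{n-5}$ and $5\cdot 2^{n-5}$. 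For $p>3$ the four residues $0,1,-1,-2$ stay distinct, and pairing $\pm$ leaves $6$ subsets for $i\equiv 0$, $6+2=8$ for $i\equiv 1$, and $0+2=2$ for $i\equiv 2$, with all other residues empty; the multipliers $2^{n-5}$ (for $i=0$) and $2^{n-6}$ (otherwise) then yield the last bullet. The main obstacle is thus not any single hard idea but the careful bookkeeping of the sixteen values of $r(T)$ together with the correct identification of the exceptional case $n=5$ and the modulo-$3$ collapse of residues.
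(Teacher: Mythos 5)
Your proposal is correct and follows essentially the same route as the paper's own proof: instantiate Theorem~\ref{thm:A} with $P=\{p^d,2p^d,p^e,p^d+p^e\}$, enumerate the eight compositions surviving Theorem~\ref{thm:Dickson} (with multinomial factor $2$ exactly when the two copies of $p^d$ are split), compute the sixteen values $r(T)$ --- your claimed multiset $\{1^{6},0^{6},(-1)^{2},(-2)^{2}\}$ matches the paper's case-by-case computation exactly --- and sort residues, correctly identifying $p=3$, $n=5$ as the only case where the hypothesis $n_0=\cdots=n_{k-1}=p-1$ holds. The only cosmetic difference is your use of the symmetry $r(T)\equiv r(P\setminus T)\pmod p$ from Corollary~\ref{cor:A} to halve the bookkeeping, which the paper does not invoke here.
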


\begin{proof}
By Theorem~\ref{thm:A}, we have $P=\{p^d, p^e, 2p^d, p^d+p^e\}$, and for each $T\subseteq P$, we can calculate $r(T)$ based on which $\beta\models n$ satisfies $D(\beta)\subseteq T$ in the definition of $r(T)$.
\begin{itemize}
\item
If $T=\{p^d,p^e,2p^d,p^d+p^e\}$ then $r(T)=1-1-2-2-1+2+2+2=1$ since the possibilities for $\beta$ form a poset with the following Hasse diagram.
\begin{center}
\begin{tikzpicture}
\node (0) at (0,0) {$(n)$};
\node (1) at (-5,1) {$(2p^d,p^e)$};
\node (2) at (-2,1) {$(p^d,p^d+p^e)$};
\node (3) at (2,1) {$(p^d+p^e,p^d)$};
\node (4) at (5,1) {$(p^e,2p^d)$};
\node (12) at (-4,2) {$(p^d,p^d,p^e)$};
\node (23) at (0,2) {$(p^d,p^e,p^d)$};
\node (34) at (3,2) {$(p^e,p^d,p^d)$};

\draw (0) -- (1); \draw (0) -- (2); \draw (0) -- (3); \draw (0) -- (4);
\draw (12) -- (1); \draw (12) -- (2); 
\draw (23) -- (2); \draw (23) -- (3);
\draw (34) -- (3); \draw (34) -- (4);
\end{tikzpicture}
\end{center}
\item
If $T=\{p^d,p^e,2p^d\}$ then $\beta\in\{(n),(p^d,p^d+p^e),(p^e,2p^d),(2p^d,p^e),(p^d,p^d,p^e)\}$ and thus  $r(T)=(-1)^3(1-2-1-1+2)=1$.
\item
If $T=\{p^d,p^e,p^d+p^e\}$ then $\beta\in\{(n),(p^d,p^d+p^e),(p^e,2p^d),(p^d+p^e,p^d),(p^d,p^e,p^d)(p^e,p^d,p^d)\}$ and thus $r(T)=-1+2+1+2-2-2=0$.
\item
If $T=\{p^d,2p^d,p^d+p^e\}$ then $\beta\in\{(n),(p^d,p^d+p^e),(2p^d,p^e),(p^d+p^e,p^d),(p^d,p^d,p^e),(p^d,p^e,p^d)\}$ and thus $r(T)=-1+2+1+2-2-2=0$.
\item
If $T=\{p^e,2p^d,p^d+p^e\}$ then $\beta\in\{(n),(p^e,2p^d),(2p^d,p^e),(p^d+p^e,p^d),(p^e,p^d,p^d)\}$ and thus $r(T)=-1+1+1+2-2=1$.
\item
If $T=\{p^d,p^e\}$ then $\beta\in\{(n),(p^d,p^d+p^e),(p^e,2p^d)\}$ and $r(T)=1-2-1=-2$.
\item
If $T=\{p^d,2p^d\}$ then $\beta\in\{(n),(p^d,p^d+p^e),(2p^d,p^e), (p^d,p^d,p^e)\}$ and $r(T)=1-2-1+2=0$.
\item
If $T=\{p^d,p^d+p^e\}$ then $\beta\in\{(n),(p^d,p^d+p^e),(p^d+p^e,p^d), (p^d,p^e,p^d)\}$ and $r(T)=1-2-2+2=-1$.
\item
If $T=\{p^e,2p^d\}$ then $\beta\in\{(n),(p^e,2p^d),(2p^d,p^e)\}$ and $r(T)=1-1-1=-1$.
\item
If $T=\{p^e,p^d+p^e\}$ then $\beta\in\{(n),(p^e,2p^d),(p^d+p^e,p^d),(p^e,p^d,p^d)\}$ and $r(T)=1-1-2+2=0$.
\item
If $T=\{2p^d,p^d+p^e\}$ then $\beta\in\{(n),(2p^d,p^e),(p^d+p^e,p^d)\}$ and $r(T)=1-1-2=-2$.
\item
If $T=\{p^d\}$ then $\beta=\in\{(n),(p^d,p^d+p^e)\}$ and $r(T)=-1+2=1$.
\item
If $T=\{p^e\}$ then $\beta\in\{(n),(p^e,2p^d)\}$ and $r(T)=-1+1=0$.
\item
If $T=\{2p^d\}$ then $\beta=\{(n),(2p^d,p^e)\}$ and $r(T)=-1+1=0$.
\item
If $T=\{p^d+p^e\}$ then $\beta\in\{(n),(p^d+p^e,p^d)$ and $r(T)=-1+2=1$.
\item
If $T=\emptyset$ then $\beta=(n)$ and $r(T)=1$.
\end{itemize}
Thus by Theorem~\ref{thm:A}, we have 
\begin{itemize}
\item
$c_{3,0}(5)=6$, $c_{3,1}(5)=8$, $c_{3,2}(5)=2$,
$c_{3,0}(n) = 6\cdot 2^{n-5}$ and $c_{3,\pm1}(n) = 10\cdot 2^{n-6}$ for $n>5$,
\item
$c_{p,0}(n) = 6\cdot 2^{n-5}$, $c_{p,\pm1}(n) = 8\cdot 2^{n-6}$, $c_{p,\pm2}(n)=2\cdot 2^{n-6}$, and $c_{p,i}(n)=0$ for all $i\not\equiv 0,\pm1,\pm2 \pmod p$ if $p>3$. \qedhere
\end{itemize}
\end{proof}

The case $n=2p^d+2p^e$ with $d\ne e$ will again be tedious.
Computations in Sage based on the definition of $c_p(n)$ show that $c_3(20)=c_3(220_3)=2^{12}(42,43,43)$. 

We compute $c_p(n)$ based on its definition for small values of $n$ and $p$ in Sage and give our data in Table~\ref{tab:A}, which agree with the results in this section; 
note that the power of $2$ given by Corollary~\ref{cor:A} may or may not be the highest in $c_{p,i}(n)$.

\begin{table}[h]
\begin{center}
\tiny
\renewcommand{\arraystretch}{1.2}
\begin{tabular}{c|ccccc} 
\toprule
$n$ & $p=2$ & $p=3$ & $p=5$ & $p=7$ & $p=11$ \\ 
\midrule
$2$ & $2(0, 1)$ & $2(0, 1, 0)$ & $2(0, 1, 0, 0, 0)$ & $2(0, 1, 0, 0, 0, 0, 0)$ & $2(0, 1, 0, 0, 0, 0, 0, 0, 0, 0, 0)$ \\
$3$ & $2(1, 1)$ & $2(0, 1, 1)$ & $2(0, 1, 1, 0, 0)$ & $2(0, 1, 1, 0, 0, 0, 0)$ & $2(0, 1, 1, 0, 0, 0, 0, 0, 0, 0, 0)$ \\
$4$ & $2^3(0, 1)$ & $2(2, 1, 1)$ & $2(1, 1, 0, 2, 0)$ & $2(0, 1, 0, 2, 0, 1, 0)$ & $2(0, 1, 0, 2, 0, 1, 0, 0, 0, 0, 0)$ \\
$5$ & $2^3(1, 1)$ & $2(3, 4, 1)$ & $2^3(0, 1, 0, 0, 1)$ & $2(0, 1, 3, 0, 3, 0, 1)$ & $2(1, 1, 0, 0, 2, 1, 1, 0, 0, 2, 0)$ \\
$6$ & $2^4(1, 1)$ & $2^4(0, 1, 1)$ & $2^3(2, 1, 0, 0, 1)$ & $2(4, 1, 0, 2, 0, 9, 0)$ & $2(0, 1, 2, 2, 2, 2, 1, 2, 2, 0, 2)$ \\
$7$ & $2^5(1, 1)$ & $2^2(6, 5, 5)$ & $2^2(6, 4, 1, 1, 4)$ & $2^5(0, 1, 0, 0, 0, 0, 1)$ & $2(5, 9, 1, 0, 3, 3, 4, 1, 1, 5, 0)$ \\
$8$ & $2^7(0, 1)$ & $2(21, 17, 26)$ & $2(22, 9, 12, 12, 9)$ & $2^5(2, 1, 0, 0, 0, 0, 1)$ & $2(6, 7, 5, 5, 8, 7, 9, 6, 5, 1, 5)$ \\
$9$ & $2^7(1, 1)$ & $2^7(0, 1, 1)$ & $2^2(19, 16, 8, 11, 10)$ & $2^4(6, 4, 1, 0, 0, 1, 4)$ & $2(4, 13, 13, 5, 12, 14, 22, 6, 19, 10, 10)$ \\
$10$ & $2^8(1, 1)$ & $2^7(2, 1, 1)$ & $2^8(0, 1, 0, 0, 1)$ & $2^3(20, 9, 7, 6, 6, 7, 9)$ & $2(27, 25, 35, 14, 16, 38, 19, 20, 13, 24, 25)$ \\
$11$ & $2^9(1, 1)$ & $2^6(6, 5, 5)$ & $2^6(6, 4, 1, 1, 4)$ & $2^2(56, 40, 21, 39, 39, 21, 40)$ & $2^9(0, 1, 0, 0, 0, 0, 0, 0, 0, 0, 1)$ \\
$12$ & $2^{10}(1, 1)$ & $2^9(2, 1, 1)$ & $2^4(38, 30, 15, 15, 30)$ & $2(204, 139, 134, 137, 137, 134, 139)$ & $2^9(2, 1, 0, 0, 0, 0, 0, 0, 0, 0, 1)$ \\
$13$ & $2^{11}(1, 1)$ & $2^6(30, 17, 17)$ & $2^3(134, 102, 87, 87, 102)$ & $2(503, 276, 294, 241, 209, 316, 209)$ & $2^8(6, 4, 1, 0, 0, 0, 0, 0, 0, 1, 4)$ \\
$14$ & $2^{12}(1, 1)$ & $2^3(406, 309, 309)$ & $2(999, 855, 716, 666, 860)$ & $2^{12}(0, 1, 0, 0, 0, 0, 1)$ & $2^7(20, 9, 6, 6, 0, 1, 1, 0, 6, 6, 9)$ \\
$15$ & $2^8(35, 29)$ & $2^{10}(6, 5, 5)$ & $2^{12}(0, 1, 1, 1, 1)$ & $2^{10}(6, 4, 1, 0, 0, 1, 4)$ & $2^6(64, 23, 9, 30, 13, 21, 21, 13, 30, 9, 23)$ \\
\bottomrule
\end{tabular}
\end{center}
\caption{$c_p(n)$ for small values of $p$ and $n$}
\label{tab:A}
\end{table}

\section{Type B}\label{sec:B}

In this section we study the ribbon numbers in type $B$.
We first recall some basic definitions and properties on Coxeter groups of type $B$; see Bj\"orner and Brenti~\cite{BjornerBrenti} for more details.

A \emph{signed permutation} of $[n]$ is a bijection $w$ from $\{\pm1, \ldots, \pm n\}$ to itself such that $w(-i)=-w(i)$ for all $i$; this can be expressed as a word $w = w(1) w(2) \cdots w(n)$, where a negative number $-j$ is often written as $\bar j$.
The signed permutations of $[n]$ form the \emph{hyperoctahedral group} $\SS_n^B$, whose order is $2^n\cdot n!$.
This group can be generated by $s_0, s_1, \ldots, s_{n-1}$, where $s_0:=\bar 1 2\cdots n$ and $s_1, \ldots, s_{n-1}$ are the adjacent transpositions, with relations
\[ \begin{cases}
s_i^2 = 1, \ 0\le i\le n-1, \\
s_0s_1s_0s_1 = s_1s_0s_1s_0, \\
s_is_{i+1}s_i = s_{i+1}s_is_{i+1},\ 1\le i \le n-2, \\
s_is_j = s_j s_i,\ |i-j|>1.
\end{cases} \]
Thus $\SS_n^B$ is the Coxeter group of type $B_n$ for $n\ge2$, whose Coxeter diagram is below.
\[
\xymatrix @C=12pt{
s_0 \ar@{-}[r]^{4} & s_1 \ar@{-}[r] & s_2 \ar@{-}[r] & \cdots \ar@{-}[r] & s_{n-2} \ar@{-}[r] & s_{n-1}
}
\]
Given a signed permutation $w\in\SS_n^B$, its length $\ell(w)$ can be described combinatorially, and with $w(0):=0$ and each $s_i$ identified with $i$, we have the descent set 
\[
D(w) = \{i\in\{0,1,\ldots,n-1\}: w(i)>w(i+1) \}.
\]

A sequence $\alpha=(\alpha_1,\ldots,\alpha_\ell)$ of integers with $\alpha_1\ge0$, $\alpha_2,\ldots,\alpha_\ell >0$, and $|\alpha|:=\alpha_1+\cdots+\alpha_\ell=n$ is called a \emph{pseudo-composition} of $n$ and written as $\alpha\models_0 n$.
There is a bijection sending $\alpha \models_0 n$ to its \emph{descent set}
\[ 
D(\alpha):= \{\alpha_1, \alpha_1+\alpha_2, \ldots, \alpha_1+\cdots+\alpha_{\ell-1}\} \subseteq \{0,1,\ldots,n-1\}, 
\]
which can also be encoded in a binary string $a_0a_1\cdots a_n$ with $a_i = 1$ if and only if $i\in D(\alpha)\cup\{n\}$.
Thus there are exactly $2^n$ pseudo-compositions of $n$, and they form a poset under the reverse refinement $\preccurlyeq$, which is isomorphic to the Boolean algebra of subsets of $\{0,1,\ldots, n-1\}$ via $\alpha\mapsto D(\alpha)$.

Similarly to type $A$, the group algebra of $\SS_n^B$ has a deformation called the \emph{type $B$ $0$-Hecke algebra} $H_n^B(0)$, which is generated by $\pi_0, \pi_1, \ldots, \pi_{n-1}$ with relations
\[ \begin{cases}
\pi_i^2 = \pi_i, \ 0\le i\le n-1, \\
\pi_0\pi_1\pi_0\pi_1 = \pi_1\pi_0\pi_1\pi_0, \\
\pi_i\pi_{i+1}\pi_i = \pi_{i+1}\pi_i\pi_{i+1},\ 1\le i \le n-2, \\
\pi_i\pi_j = \pi_j \pi_i,\ |i-j|>1.
\end{cases} \]
The irreducible modules of $H_n^B(0)$ are indexed by pseudo-compositions $\alpha \models_0 n$, and so are the projective indecomposable modules.
The former are all one dimensional, while the latter have dimensions given by the \emph{type $B$ ribbon numbers}
\[ r_\alpha^B := \left\{ w \in \SS_n^B: D(w) = D(\alpha) \right\}. \]

We need a formula for $r^B_\alpha$ to reduce it modulo a prime $p$.

\begin{proposition}\label{prop:RibbonB}
For each pseudo-composition $\alpha$ of $n$, we have
\[ r_\alpha^B = \sum_{\beta\preccurlyeq \alpha} (-1)^{\ell(\alpha)-\ell(\beta)} 2^{n-\beta_1} \binom{n}{\beta}. \]
\end{proposition}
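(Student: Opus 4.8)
The plan is to mimic the inclusion-exclusion derivation of the type $A$ formula~\eqref{eq:ribbon}, but now working inside the hyperoctahedral group $\SS_n^B$ and keeping careful track of the extra factor of $2$ that arises from signs. The key is to first compute $|\{w\in\SS_n^B: D(w)\subseteq D(\beta)\}|$ for a fixed pseudo-composition $\beta\models_0 n$, and then recover $r_\alpha^B$ by Möbius inversion over the Boolean algebra of subsets of $\{0,1,\ldots,n-1\}$.

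First I would fix a pseudo-composition $\beta=(\beta_1,\ldots,\beta_\ell)$ of $n$ and count the signed permutations $w$ with $D(w)\subseteq D(\beta)$. Recall that with the convention $w(0)=0$, the descent set is $D(w)=\{i\in\{0,\ldots,n-1\}: w(i)>w(i+1)\}$, so $D(w)\subseteq D(\beta)$ means $w(i)<w(i+1)$ for every $i\in\{0,1,\ldots,n-1\}\setminus D(\beta)$. In particular, since $0\notin D(\beta)$ exactly when $\beta_1>0$, the condition at $i=0$ forces $w(1)>0=w(0)$, i.e.\ the first block of values must be positive. The claim I expect is that such $w$ are in bijection with a choice of which unsigned values fill the blocks determined by $D(\beta)$ together with a sign choice on each entry outside the first block. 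The blocks are the intervals cut out by $D(\alpha)\cup\{n\}$; the first block has length $\beta_1$ and its entries are forced to be positive (so no sign freedom there), while each of the remaining $n-\beta_1$ positions may independently carry either sign. Once signs are chosen, the increasing condition on each block pins down the arrangement of the chosen values uniquely, so the number of underlying set partitions of the values into ordered increasing blocks is the multinomial coefficient $\binom{n}{\beta}$. This gives
\[
\bigl|\{w\in\SS_n^B: D(w)\subseteq D(\beta)\}\bigr| = 2^{n-\beta_1}\binom{n}{\beta}.
\]
I would prove this bijection carefully, since it is the heart of the argument: the main obstacle is to justify that the sign assignments on the last $n-\beta_1$ positions are genuinely free and independent of the block-filling, and that the first block is forced positive precisely because $0\notin D(\beta)$. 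One must check that after fixing signs, demanding each block be increasing has exactly one solution, using that within a block there are no forced descents and the absolute values together with signs determine a unique increasing word.

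With the subset count in hand, the formula follows by inclusion-exclusion on the finite Boolean algebra. Since $\{w: D(w)=D(\alpha)\}$ and the nested sets $\{w: D(w)\subseteq D(\beta)\}$ for $\beta\preccurlyeq\alpha$ are related by $\bigl|\{w: D(w)\subseteq D(\alpha)\}\bigr| = \sum_{\beta\preccurlyeq\alpha} r_\beta^B$, Möbius inversion over the Boolean algebra (where the Möbius function is $(-1)^{|D(\alpha)|-|D(\beta)|} = (-1)^{\ell(\alpha)-\ell(\beta)}$) yields
\[
r_\alpha^B = \sum_{\beta\preccurlyeq\alpha} (-1)^{\ell(\alpha)-\ell(\beta)} \bigl|\{w\in\SS_n^B: D(w)\subseteq D(\beta)\}\bigr| = \sum_{\beta\preccurlyeq\alpha} (-1)^{\ell(\alpha)-\ell(\beta)} 2^{n-\beta_1}\binom{n}{\beta},
\]
which is exactly the claimed identity. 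This last step is formally identical to the type $A$ computation recorded in Section~\ref{sec:prelim} and needs only the observation that $\ell(\alpha)-\ell(\beta)=|D(\alpha)|-|D(\beta)|$ for pseudo-compositions, so I would treat it briefly and concentrate the write-up on establishing the factor $2^{n-\beta_1}\binom{n}{\beta}$.
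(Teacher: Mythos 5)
Your proposal is correct and follows essentially the same route as the paper: establish the count $|\{w\in\SS_n^B: D(w)\subseteq D(\beta)\}| = 2^{n-\beta_1}\binom{n}{\beta}$ and then apply inclusion-exclusion (M\"obius inversion) over the Boolean algebra. The only difference is that the paper asserts this count as straightforward (via a direct combinatorial argument or a parabolic-subgroup quotient), whereas you spell out the direct bijective argument --- the forced positivity of the first block when $0\notin D(\beta)$ and the free signs on the remaining $n-\beta_1$ positions --- which is exactly the combinatorial argument the paper has in mind.
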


\begin{proof}
Given a pseudo-composition $\beta=(\beta_1,\ldots,\beta_\ell)$ of $n$, the number of signed permutations in $\SS_n^B$ with descent set contained in $D(\beta)$ is
\[ \frac{ 2^n n! } { 2^{\beta_1} \beta_1! \cdots \beta_\ell!} = 2^{n-\beta_1} \binom{n}{\beta}. \]
It is straightforward to prove this either by a direct combinatorial argument or using the quotient of $\SS_n^B$ by its parabolic subgroup indexed by the pseudo-composition $\beta^c$ of $n$ with $D(\beta^c) = \{0,1,\ldots,n\}\setminus D(\beta)$.
The desired formula then follows from inclusion-exclusion.
\end{proof}

Using the same strategy as in type $A$, we can determine the \emph{type $B$ composition dimension $p$-vector} $c^B_p(n):=\left(c^B_{p,i}(n): i\in\mathbb{Z}_p\right)$, where
\[ c^B_{p,i}(n) := \left|\left\{ \alpha\models_0 n: r_\alpha^B \equiv i \pmod p \right\}\right|. \]

We first solve the case $p=2$.

\begin{corollary}
We have $c_{2,0}(n)=0$ and $c_{2,1}(n)=2^n$, i.e., $r_\alpha^B$ is odd for every pseudo-composition $\alpha$ of $n$.
\end{corollary}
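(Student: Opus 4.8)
The plan is to apply the formula for $r_\alpha^B$ from Proposition~\ref{prop:RibbonB} and reduce it modulo $2$. The key feature distinguishing type $B$ from type $A$ is the explicit power-of-two factor $2^{n-\beta_1}$ attached to each summand. Since $2^{n-\beta_1}$ is even whenever $\beta_1 < n$, every term of the sum will vanish modulo $2$ except those with $\beta_1 = n$. This is a much cruder mechanism than the Dickson-congruence analysis needed in type $A$, and it is what makes the type $B$ answer uniform.

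Next I would observe that, for a pseudo-composition $\beta = (\beta_1, \ldots, \beta_\ell)$ of $n$, the condition $\beta_1 = n$ forces $\ell = 1$ and $\beta = (n)$, because the remaining parts $\beta_2, \ldots, \beta_\ell$ are strictly positive and sum to $n - \beta_1 = 0$. Moreover, the pseudo-composition $(n)$ has empty descent set $D((n)) = \emptyset$, so it is the minimum of the refinement poset and satisfies $(n) \preccurlyeq \alpha$ for every pseudo-composition $\alpha$ of $n$. Hence exactly one term survives the reduction modulo $2$, namely the one indexed by $\beta = (n)$, and it is present in the sum for \emph{every} $\alpha$.

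Finally, I would evaluate this surviving term: with $\ell(\beta) = 1$, $\beta_1 = n$, $2^{n-\beta_1} = 2^0 = 1$, and $\binom{n}{n} = 1$, it equals $(-1)^{\ell(\alpha)-1}$, which is $\pm 1$ and hence odd. Therefore $r_\alpha^B \equiv 1 \pmod 2$ for every $\alpha \models_0 n$, giving $c^B_{2,1}(n) = 2^n$ (the total number of pseudo-compositions of $n$) and $c^B_{2,0}(n) = 0$. I do not anticipate a genuine obstacle here: unlike the type $A$ case, where the interacting multinomial coefficients require Theorem~\ref{thm:Dickson} and a careful count, the $2^{n-\beta_1}$ factor collapses the entire alternating sum to a single unit term, and the only point worth checking carefully is that $\beta_1 = n$ pins down $\beta = (n)$ uniquely and that this $\beta$ is refined by every $\alpha$.
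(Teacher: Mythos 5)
Your proposal is correct and follows essentially the same route as the paper: both reduce the formula of Proposition~\ref{prop:RibbonB} modulo $2$, noting that the factor $2^{n-\beta_1}$ kills every summand except the one with $\beta_1=n$, i.e., $\beta=(n)$, whose contribution is $\pm1$. Your write-up merely spells out the details (that $\beta_1=n$ forces $\beta=(n)$, and that $(n)\preccurlyeq\alpha$ for every $\alpha$) that the paper leaves implicit.
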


\begin{proof}
Let $\alpha$ be a pseudo-composition of $n$. 
By Proposition~\ref{prop:RibbonB}, $r_\alpha^B$ is a sum over pseudo-compositions $\beta\preccurlyeq \alpha$, where the summand indexed by $\beta$ is odd if and only if $\beta_1=n$, i.e., $\beta=(n)$.
Thus $r_\alpha^B$ is odd.
\end{proof}

From now on, we may assume that $p$ is an odd prime.

\begin{theorem}\label{thm:B}
Let $p$ be an odd prime and $n\ge2$ an integer with $[n]_p=(n_0,n_1,\ldots,n_k)$.
Define
\[ P := \left\{ 
b_0+b_1 p+\cdots+b_k p^k: 0\le b_j\le n_j,\ j=0,1,\ldots,k 
\right \}
\setminus \{n\}, \]
which is a subset of $\{0,1,\ldots,n-1\}$ with $|P| = (n_0+1)\cdots(n_k+1)-1$.
For each $T\subseteq P$, define 
\[ 
r^B(T) := \sum_{ \substack{ \beta\models_0 n,\ D(\beta) \subseteq T \\
\beta_{1j}+\cdots+\beta_{\ell j} = n_j,\ \forall j}}
(-1)^{|T|-|D(\beta)|} 
\prod_{j=0}^k 2^{n_j-\beta_{1j}} \binom{n_j}{\beta_{1j}, \ldots, \beta_{\ell j}}.
\]
Here $\beta=(\beta_1,\ldots,\beta_\ell)$ is a pseudo-composition of $n$ with $[\beta_i]_p = (\beta_{i0}, \beta_{i1}, \ldots, \beta_{id})$ for $i=1,\ldots,\ell$.
Then
\[ c^B_{p,i}(n) = 
\begin{cases}
2^{n+1-(n_0+1)\cdots(n_k+1)} \left| \{ T\subseteq P: r^B(T)\equiv i\pmod p \} \right| & \text{if $i=0$ or $n_0=\cdots=n_{k-1}=p-1$} \\
2^{n-(n_0+1)\cdots(n_k+1)} \left| \{ T\subseteq P: r^B(T)\equiv \pm i\pmod p \} \right| & \text{otherwise}.
\end{cases} \]
\end{theorem}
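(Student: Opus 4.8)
The plan is to mirror the proof of Theorem~\ref{thm:A}, using Proposition~\ref{prop:RibbonB} in place of the formula~\eqref{eq:ribbon}. First I would fix a pseudo-composition $\alpha\models_0 n$ with binary string $a_0a_1\cdots a_n$ (where $a_n=1$) and reduce $r_\alpha^B$ modulo $p$. As in type $A$, Theorem~\ref{thm:Dickson} shows that a summand indexed by $\beta\preccurlyeq\alpha$ in Proposition~\ref{prop:RibbonB} survives modulo $p$ only when $\beta_{1j}+\cdots+\beta_{\ell j}=n_j$ for all $j$, in which case $\binom{n}{\beta}\equiv\prod_j\binom{n_j}{\beta_{1j},\ldots,\beta_{\ell j}}\pmod p$ and $D(\beta)\subseteq P$. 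Since $p$ is odd, the extra factor $2^{n-\beta_1}$ never vanishes modulo $p$, so it does not change which summands survive.

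The one genuinely new ingredient, and the step I expect to require the most care, is reconciling the single power $2^{n-\beta_1}$ in Proposition~\ref{prop:RibbonB} with the product $\prod_j 2^{n_j-\beta_{1j}}$ appearing in the definition of $r^B(T)$. For a surviving $\beta$ we have $\beta_{1j}\le n_j$, so no borrowing occurs and $n-\beta_1=\sum_j(n_j-\beta_{1j})p^j$. Because $p\equiv 1\pmod{p-1}$, each $p^j\equiv 1\pmod{p-1}$, hence $n-\beta_1\equiv\sum_j(n_j-\beta_{1j})\pmod{p-1}$, and Fermat's little theorem ($2^{p-1}\equiv1\pmod p$) then gives
\[ 2^{n-\beta_1}\equiv\prod_{j=0}^k 2^{n_j-\beta_{1j}}\pmod p. \]
This is exactly the factor in $r^B(T)$; it is also the reason the hypothesis that $p$ is odd is essential (and why the case $p=2$ is handled separately beforehand).

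With this reduction in hand, the remaining combinatorics is the type $B$ analogue of the type $A$ argument. Restricting attention to the substring $\hat a:=(a_r:r\in P)$, I would fix a binary string $b$ indexed by $P$ with $\supp(b)=T$ and observe that, for any $\alpha$ with $\hat a=b$, the surviving $\beta$ are exactly those with $D(\beta)\subseteq T$. Writing $e:=|D(\alpha)\setminus P|$ for the number of descents of $\alpha$ outside $P$ and using $\ell(\alpha)-\ell(\beta)=|D(\alpha)|-|D(\beta)|$, one obtains $r_\alpha^B\equiv(-1)^e r^B(T)\pmod p$. The only bookkeeping difference from type $A$ is that the descent string now has the $n+1$ entries $a_0,\ldots,a_n$ with $a_n=1$ fixed and $0$ an allowed descent, so $P$ omits only $n$ and $|P|=\prod_j(n_j+1)-1$; the free positions lie in $\{0,\ldots,n-1\}\setminus P$, of which there are $n-|P|=n+1-\prod_j(n_j+1)$.

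Finally I would count. For each $T\subseteq P$ there are $2^{\,n+1-\prod_j(n_j+1)}$ pseudo-compositions $\alpha$ with $\hat a=b$. If $P\ne\{0,\ldots,n-1\}$---equivalently, not all of $n_0,\ldots,n_{k-1}$ equal $p-1$---then toggling a free entry shows exactly half of these have $e$ even and half have $e$ odd, so they split evenly between $r^B(T)$ and $-r^B(T)$ modulo $p$; summing over $T$ and separating $i=0$ (where $\pm i$ coincide) from $i\ne0$ (where they are distinct because $p$ is odd) yields the two stated formulas. If instead $n_0=\cdots=n_{k-1}=p-1$, then $\prod_j(n_j+1)=n+1$, so $P=\{0,\ldots,n-1\}$, each $T$ corresponds to a unique $\alpha$ with $e=0$, the prefactor $2^{\,n+1-\prod_j(n_j+1)}$ degenerates to $1$, and $c^B_{p,i}(n)=|\{T:r^B(T)\equiv i\}|$, matching the first case. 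The main obstacle is the power-of-two factorization in the second paragraph; once that is established, the rest is a routine adaptation of the type $A$ proof.
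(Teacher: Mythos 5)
Your proposal is correct and follows essentially the same route as the paper's proof: reduce $r^B_\alpha$ via Proposition~\ref{prop:RibbonB}, use Theorem~\ref{thm:Dickson} to restrict to pseudo-compositions $\beta$ with $D(\beta)\subseteq P$, apply Fermat's little theorem to replace $2^{n-\beta_1}$ by $\prod_j 2^{n_j-\beta_{1j}}$, and then count pseudo-compositions $\alpha$ with a fixed substring $\hat a$ on $P$, splitting by the parity of the number of descents outside $P$. Your write-up is in fact slightly more explicit than the paper's (the $(-1)^e$ sign bookkeeping and the degenerate case $n_0=\cdots=n_{k-1}=p-1$), but the argument is the same.
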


\begin{proof}
Let $\alpha$ be a pseudo-composition of $n$, whose corresponding binary string is $a = a_0a_1\cdots a_n$ with $a_n=1$.
We use Proposition~\ref{prop:RibbonB} to reduce $r^B_\alpha$ modulo $p$.
We have $\beta \preccurlyeq \alpha$ if and only if $a_r=1$ for all $r$ in
\[ 
\left\{ \sum_{i=1}^s \sum_{j=0}^k \beta_{ij}p^j: s=1,\ldots\ell-1 \right\}. 
\]
Moreover, if $\binom{n}{\beta}\not\equiv 0\pmod p$ then $\beta_{1j}+\cdots+\beta_{\ell j} = n_j$ for all $j=0,1,\ldots,k$ by Theorem~\ref{thm:Dickson}, and this implies $D(\beta)\subseteq P$.
Thus to find which pseudo-compositions $\beta$ with $\binom{n}{\beta}\not\equiv 0\pmod p$ are refined by $\alpha$, it suffices to look at the substring $\hat a := (a_r: r\in P)$ of $a$.
It is easy to see that $P$ is a subset of $\{0,1,\ldots,n-1\}$ with $|P|=(n_0+1)\cdots(n_k+1)-1$.

Fix any binary string $b$ indexed by $P$ with $\supp(b) := \{r\in P: b_r=1\}=T$, and suppose $\hat a = b$.
Then we have exactly $2^{n-|P|}$ possibilities for $\alpha$, half of which have even lengths by toggling $a_j$ for some $j\in\{0,1,\ldots,n-1\}\setminus P$ unless $P=\{0,1,\ldots,n-1\}$, i.e., $n_0=\cdots=n_{k-1}=p-1$.
We also have
\[
n-\beta_1 = \sum_{j=0}^k (n_j-\beta_{1j})p^j
\ \Longrightarrow\
2^{n-\beta_1} \equiv \prod_{j=0}^k 2^{n_j-\beta_{1j}} \pmod p \]
by Fermat's little theorem.
Combining this with Proposition~\ref{prop:RibbonB} and Theorem~\ref{thm:Dickson} we obtain
\[
r^B_\alpha \equiv \sum_{ \substack{ \beta\models_0 n \\ 
D(\beta) \subseteq T } } 
(-1)^{\ell(\alpha)-\ell(\beta)} \prod_{j=0}^k 2^{n_j-\beta_{1j}} \binom{n_j}{\beta_{1j}, \ldots, \beta_{\ell j}} 
\equiv r^B(T) \pmod p.
\]
The result follows.
\end{proof}

We derive some consequences of Theorem~\ref{thm:B} below.

\begin{corollary}\label{cor:B}
Let $p$ be an odd prime and $n\ge2$ an integer with $[n]_p=(n_0,\ldots,n_k)$.
For all $i\in\mathbb{Z}_p$ we have $c^B_{p,i}(n)=c^B_{p,-i}(n)$ unless $n_0=\cdots=n_{k-1}=p-1$ and 
\[ \text{$c^B_{p,i}(n)$ is divisible by }
\begin{cases}
2^{n+2-(n_0+1)\cdots(n_k+1)}, & \text{if $i=0$ or $n_0=\cdots=n_{k-1}=p-1$}, \\
2^{n+1-(n_0+1)\cdots(n_k+1)}, & \text{otherwise}.
\end{cases} \]
\end{corollary}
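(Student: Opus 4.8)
The plan is to follow the proof of Corollary~\ref{cor:A} essentially verbatim, replacing Theorem~\ref{thm:A} by Theorem~\ref{thm:B} and specializing the Coxeter-group symmetry $r_I^S = r_{S\setminus I}^S$ recalled in Section~\ref{sec:prelim} to type $B$.

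First I would read off the symmetry $c^B_{p,i}(n) = c^B_{p,-i}(n)$ directly from the shape of the formula in Theorem~\ref{thm:B}. In the generic branch the count runs over those $T$ with $r^B(T)\equiv\pm i$, which is manifestly invariant under $i\mapsto -i$, and for $i=0$ the claim is trivial since $-i=i$. The only branch whose congruence condition is $r^B(T)\equiv i$ rather than $\pm i$ (with $i\neq 0$) is exactly $n_0=\cdots=n_{k-1}=p-1$, which is the excluded case.

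Next I would convert the full-group symmetry into a statement about the reduced residues $r^B(T)$. Let $\alpha^*$ denote the pseudo-composition with $D(\alpha^*)=\{0,\ldots,n-1\}\setminus D(\alpha)$, so that $r^B_{\alpha^*}=r^B_\alpha$ by the longest-element symmetry. Its binary string is the bitwise complement of that of $\alpha$ on the free positions $0,\ldots,n-1$, and restricting to the coordinates in $P$ carries the support $T$ of $\hat a$ to $P\setminus T$. Hence $r^B(P\setminus T)\equiv r^B_{\alpha^*}=r^B_\alpha\equiv r^B(T)\pmod p$, so the involution $T\mapsto P\setminus T$ preserves each of the congruence conditions $r^B(T)\equiv i$ and $r^B(T)\equiv\pm i$.

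Finally, because $n\ge 2$ forces $n_k\ge 1$ and hence $|P|=(n_0+1)\cdots(n_k+1)-1\ge 1$, the set $P$ is nonempty, so the involution $T\mapsto P\setminus T$ is fixed-point-free on subsets of $P$. Therefore each cardinality appearing in Theorem~\ref{thm:B} is even, supplying one extra factor of $2$ on top of the prefactor $2^{n+1-(n_0+1)\cdots(n_k+1)}$ or $2^{n-(n_0+1)\cdots(n_k+1)}$, which yields precisely the claimed divisibility in each case. The only step requiring genuine care is the translation in the third paragraph---checking that the bitwise complement on the free coordinates restricts to complementation within $P$---but this is the exact type $B$ analog of the mechanism underlying Corollary~\ref{cor:A}, and I anticipate no further obstacle.
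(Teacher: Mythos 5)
Your proposal is correct and follows essentially the same route as the paper's proof: read the $i\mapsto -i$ symmetry off the $\pm i$ branch of Theorem~\ref{thm:B}, use the longest-element symmetry $r_I^S=r_{S\setminus I}^S$ to get $r^B(T)\equiv r^B(P\setminus T)\pmod p$, and extract the extra factor of $2$ from the complementation involution on subsets of $P$. The paper states this tersely; your added details (the translation of complementation on descent sets to $T\mapsto P\setminus T$ inside $P$, and the fixed-point-freeness from $P\neq\emptyset$) are exactly the implicit steps and are verified correctly.
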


\begin{proof}
Theorem~\ref{thm:B} immediately implies that $c^B_{p,i}(n)=c^B_{p,-i}(n)$ for all $i\in\mathbb{Z}_p$ unless $n_0=\cdots=n_{k-1}=p-1$.
The symmetry $r_I^S=r_{S\setminus I}^S$ for the ribbon numbers of a finite Coxeter system $(W,S)$ mentioned in Section~\ref{sec:prelim} implies that $r^B(T) \equiv r^B(P\setminus T) \pmod p$.
Thus $c^B_{p,i}(n)$ is divisible by the desired power of $2$.
\end{proof}

We can make Theorem~\ref{thm:B} more explicit in some special situations.
We begin with the case when $n$ is a multiple of a power of an odd prime $p$.

\begin{corollary}\label{cor:PowerB}
If $n=mp^d$, where $p$ is an odd prime, $m\in\{1,\ldots,p-1\}$ and $d\ge0$ is an integer, then
\[ c^B_{p,i}(n) =
\begin{cases}
2^{n-m} |\{\gamma\models_0 m: r^B_\gamma \equiv i \pmod p\}| & \text{if $i=0$ or $d=0$}, \\
2^{n-m-1} |\{\gamma\models_0 m: r^B_\gamma \equiv \pm i \pmod p\}| & \text{otherwise}.
\end{cases}\]
\end{corollary}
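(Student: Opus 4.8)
The plan is to specialize Theorem~\ref{thm:B} in the same way that Corollary~\ref{cor:PowerA} specializes Theorem~\ref{thm:A}, but carrying the extra factor $2^{n-\beta_1}$ from Proposition~\ref{prop:RibbonB} through the computation. Since $n=mp^d$ with $1\le m\le p-1$, its base-$p$ expansion is $[n]_p=(0,\ldots,0,m)$ with the entry $m$ in position $d$, so $k=d$ and $(n_0+1)\cdots(n_k+1)=m+1$. First I would compute the set $P$ of Theorem~\ref{thm:B}: only the digit in position $d$ may be nonzero, so $P=\{jp^d:0\le j\le m\}\setminus\{mp^d\}=\{jp^d:0\le j\le m-1\}$ and $|P|=m$. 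Each subset $T\subseteq P$ then corresponds to a pseudo-composition $\gamma\models_0 m$ via $D(\gamma)=\{t/p^d:t\in T\}$, giving a bijection between the $2^m$ subsets of $P$ and the $2^m$ pseudo-compositions of $m$.

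The central step is to show $r^B(T)=r^B_\gamma$. In the defining sum for $r^B(T)$, the digit constraints $\beta_{1j}+\cdots+\beta_{\ell j}=n_j$ force $\beta_{ij}=0$ for every $j\ne d$, so each part of $\beta$ is a multiple of $p^d$; writing $\beta=p^d\delta$ produces a pseudo-composition $\delta=(\delta_1,\ldots,\delta_\ell)\models_0 m$ with $\delta_i=\beta_{id}$. Under this rescaling the product over $j$ collapses: every factor with $j\ne d$ equals $2^{0}\binom{0}{0,\ldots,0}=1$, while the $j=d$ factor is $2^{m-\delta_1}\binom{m}{\delta}$. Moreover $D(\beta)\subseteq T$ translates to $\delta\preccurlyeq\gamma$, and $(-1)^{|T|-|D(\beta)|}=(-1)^{\ell(\gamma)-\ell(\delta)}$. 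Hence
\[ r^B(T)=\sum_{\delta\preccurlyeq\gamma}(-1)^{\ell(\gamma)-\ell(\delta)}2^{m-\delta_1}\binom{m}{\delta}=r^B_\gamma \]
by Proposition~\ref{prop:RibbonB}.

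Finally I would resolve the case split in Theorem~\ref{thm:B}. With $(n_0+1)\cdots(n_k+1)=m+1$, the two prefactors become $2^{n+1-(m+1)}=2^{n-m}$ and $2^{n-(m+1)}=2^{n-m-1}$. It remains to identify the condition $n_0=\cdots=n_{k-1}=p-1$: since every digit of $n$ below position $d$ vanishes and $p-1\ge 2$ for an odd prime, this condition can hold only when its index range is empty, i.e.\ when $d=0$, so it is equivalent to $d=0$. Substituting $r^B(T)=r^B_\gamma$ and this equivalence into Theorem~\ref{thm:B} yields the two stated formulas. The only point requiring genuine care is the bookkeeping of the factor $2^{m-\delta_1}$, which must reproduce exactly the shape of Proposition~\ref{prop:RibbonB}; once this is verified, everything else parallels the type $A$ computation.
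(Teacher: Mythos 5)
Your proposal is correct and follows essentially the same route as the paper: specialize Theorem~\ref{thm:B} to $[n]_p=(0,\ldots,0,m)$, identify $P=\{jp^d: 0\le j\le m-1\}$, biject subsets $T\subseteq P$ with pseudo-compositions $\gamma\models_0 m$, and rescale $\beta=p^d\delta$ so that the digit-constrained sum collapses to $\sum_{\delta\preccurlyeq\gamma}(-1)^{\ell(\gamma)-\ell(\delta)}2^{m-\delta_1}\binom{m}{\delta}=r^B_\gamma$ via Proposition~\ref{prop:RibbonB}. Your explicit verification that the condition $n_0=\cdots=n_{k-1}=p-1$ is equivalent to $d=0$ (using $p-1\ge2$ for odd $p$) is a detail the paper leaves implicit, but it is the same argument.
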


\begin{proof}
By Theorem~\ref{thm:B}, we have $[n]_p=(0,\ldots,0,m)$, $P=\{jp^d: j=0,1,\ldots,m-1\}$, and each $T\subseteq P$ corresponds to a pseudo-composition $\gamma\models_0 m$ with descent set $D(\gamma)=\{t/p^d: t\in T\}$ such that
\begin{align*}
r^B(T) & = \sum_{ \substack{ \beta\models_0 n \\ D(\beta) \subseteq T } } 
(-1)^{|T|-|D(\beta)|} \prod_{j=0}^d  2^{n_j-\beta_{1j}} \binom{n_j}{\beta_{1j}, \ldots, \beta_{\ell j}} \\
&= \sum_{ \delta\preccurlyeq \gamma } 
(-1)^{\ell(\gamma)-|\ell (\delta)|} 2^{k-\delta_1} \binom{k}{\delta} 
= r^B_\gamma,
\end{align*}
where $\delta$ is obtained from $\beta$ by dividing each part by $p^d$.
The result follows.
\end{proof}

\begin{example}
Corollary~\ref{cor:PowerB} becomes trivial when $d=0$. 
Assume $d>0$ below.
For $m=1,2,3$ we can compute $r^B_\gamma$ for all $\gamma\models m$:
\begin{itemize}
\item
$r^B_{(1)} = r^B_{(0,1)} = 1$, 
$r^B_{(2)} = r^B_{(0,1,1)} = 1$,
$r^B_{(1,1)} = r^B_{0,2} = 3$,
\item
$r^B_{(3)} = r^B_{(0,1,1,1)} = 1$, 
$r^B_{(2,1)} = r^B_{(0,1,2)} = 5$,
$r^B_{(0,3)} = r^B_{(1,1,1)} = 7$,
$r^B_{(1,2)} = r^B_{(0,2,1)} = 11$.
\end{itemize} 
Thus we have the following by Corollary~\ref{cor:PowerB}, where $p$ is an odd prime.
\begin{itemize}
\item
Assume $n=p^d$. Then $c_p(n)=(0,2^{n-1},0,\ldots,0,2^{n-1})$.
\item
Assume $n=2p^d$. Then $c_p(n)=2^{n-2}(2,1,1)$ when $p=3$ and $c_{p,i}(n)=2^{n-2}$ if $i\equiv\pm1,\pm3\pmod p$ or $c_{p,i}(n)=0$ otherwise when $p>3$.
\item
Assume $n=3p^d$. Then $c_p(n)=2^{n-3}(2,2,1,1,2)$ when $p=5$, $c_p(n)=2^{n-3}(2,1,1,1,1,1,1)$ when $p=7$, $c_p(n)=2^{n-3}(2,1,0,0,1,1,1,1,0,0,1)$ when $p=11$, and $c_{p,i}(n)=2^{n-3}$ if $i\equiv\pm1,\pm5,\pm7,\pm11 \pmod p$ or $c_{p,i}(n) = 0$ otherwise when $p>11$.
\end{itemize}
\end{example}

Next, we consider the case when $n$ is a sum of distinct powers of an odd prime $p$.

\begin{corollary}\label{cor:PowersB}
Suppose $n=p^{d_1}+\cdots+p^{d_k}$, where $p$ is an odd prime, $0\le d_1<\cdots<d_k$, and $k>1$.
Define
\[ 
P:= \left\{ \overline U: U\subsetneqq \{p^{d_1}, \ldots, p^{d_k} \} \right\} 
\quad \text{ordered by } \preccurlyeq
\] 
where $\overline U$ denotes the sum of all elements of $U$ and $\overline U \preccurlyeq \overline V$ in the poset $P$ if and only if $U\subseteq V$.
Then
\[ c^B_{p,i}(n) = 
\begin{cases}
2^{n-2^d+1} \left| \left\{ T\subseteq P: \chi^B(T) \equiv i \pmod p \right\} \right|, & \text{if $i=0$}, \\
2^{n-2^d}|\{ T\subseteq P: \chi^B(T) \equiv \pm i \pmod p \}|, & \text{otherwise}.
\end{cases} \]
Here $\chi^B(T)$ is the following sum over chains (including the empty one) in $T$ (as a subposet of $P$):
\[ 
\chi^B(T) := \sum_{ \substack{ U_1\subsetneqq \cdots \subsetneqq U_h \subsetneqq U_{h+1}= \left\{p^{d_1}, \cdots, p^{d_k} \right\} \\ \overline{U_1}, \ldots, \overline{U_h}\in T }} (-1)^h 2^{d-|U_1|} 
\]
\end{corollary}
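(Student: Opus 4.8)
The plan is to specialize Theorem~\ref{thm:B} to $n=p^{d_1}+\cdots+p^{d_k}$ and then to collapse the quantity $r^B(T)$ into the combinatorial statistic $\chi^B(T)$, exactly mirroring the proof of Corollary~\ref{cor:PowersA} but with extra bookkeeping for the type $B$ factor. Since $n$ is a sum of distinct powers of $p$, every base-$p$ digit $n_j$ lies in $\{0,1\}$; hence $(n_0+1)\cdots(n_k+1)=2^k$, the set $P$ of Theorem~\ref{thm:B} is precisely $\{\overline U: U\subsetneqq\{p^{d_1},\ldots,p^{d_k}\}\}$ with $|P|=2^k-1$, and each multinomial coefficient $\binom{n_j}{\beta_{1j},\ldots,\beta_{\ell j}}$ in the definition of $r^B(T)$ equals $1$, because $n_j\le1$ forces at most one of $\beta_{1j},\ldots,\beta_{\ell j}$ to be $1$ and the rest $0$. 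The only genuinely new ingredient compared to type $A$ is the factor $\prod_{j=0}^k 2^{n_j-\beta_{1j}}$.

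First I would record what that factor becomes. Writing $U_1:=\{p^{d_j}:\beta_{1d_j}=1\}$ for the set of powers occurring in the first part $\beta_1=\overline{U_1}$, the constraint $\beta_{1j}\le n_j$ gives $\sum_j(n_j-\beta_{1j})=k-|U_1|$, so $\prod_{j=0}^k 2^{n_j-\beta_{1j}}=2^{k-|U_1|}$ (here $k$ is the number of distinct powers, so the exponent $d$ in the statement of $\chi^B(T)$ should read $k$). Next I would set up the standard correspondence: an admissible pseudo-composition $\beta=(\beta_1,\ldots,\beta_\ell)$ of $n$ with $D(\beta)\subseteq T$ and nonvanishing multinomial product corresponds bijectively to a flag $U_1\subsetneqq\cdots\subsetneqq U_{\ell-1}\subsetneqq U_\ell=\{p^{d_1},\ldots,p^{d_k}\}$, where $U_i$ is the union of the first $i$ blocks, $U_1$ is allowed to be empty (reflecting $\beta_1\ge0$), and the prefix sums $\overline{U_1},\ldots,\overline{U_{\ell-1}}$ are exactly the elements of $D(\beta)$, hence all lie in $T$. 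Under this correspondence $|D(\beta)|=\ell-1=:h$ and the first block is $U_1$, so the summand of $r^B(T)$ indexed by $\beta$ equals $(-1)^{|T|-h}2^{k-|U_1|}$. Factoring out $(-1)^{|T|}$ and summing over all such flags gives
\[ r^B(T)=(-1)^{|T|}\sum_{\substack{U_1\subsetneqq\cdots\subsetneqq U_h\subsetneqq U_{h+1}=\{p^{d_1},\ldots,p^{d_k}\}\\ \overline{U_1},\ldots,\overline{U_h}\in T}}(-1)^h 2^{k-|U_1|}=(-1)^{|T|}\chi^B(T), \]
where the empty chain $h=0$ corresponds to $\beta=(n)$ and contributes $2^{k-k}=1$.

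Finally I would feed the identity $r^B(T)=(-1)^{|T|}\chi^B(T)$ into Theorem~\ref{thm:B}. Because $(-1)^{|T|}\in\{\pm1\}$, replacing $r^B(T)$ by $\chi^B(T)$ preserves both the set $\{T:r^B(T)\equiv0\}$ and each set $\{T:r^B(T)\equiv\pm i\}$, so the counts in Theorem~\ref{thm:B} are unchanged. It remains to note that the exceptional branch of Theorem~\ref{thm:B}, which requires all lower base-$p$ digits of $n$ to equal $p-1$, cannot occur here: every such digit is $0$ or $1$ while $p-1\ge2$ for an odd prime $p$. Hence only the generic branch applies, yielding the stated values of $c^B_{p,i}(n)$ with powers $2^{n-2^k+1}$ and $2^{n-2^k}$. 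The main obstacle is purely bookkeeping, namely verifying that the flag correspondence simultaneously tracks the descent condition $D(\beta)\subseteq T$, the sign $(-1)^{|D(\beta)|}$, and the weight $2^{k-|U_1|}$ coming from the type $B$ factor $2^{n-\beta_1}$; once these three are matched, the identity $r^B(T)=(-1)^{|T|}\chi^B(T)$ and hence the corollary follow at once.
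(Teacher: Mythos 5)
Your proposal is correct and takes essentially the same approach as the paper's proof: specialize Theorem~\ref{thm:B}, observe that all multinomial factors equal $1$ and the type-$B$ weight $\prod_j 2^{n_j-\beta_{1j}}$ becomes $2^{k-|U_1|}$, so that $r^B(T)=(-1)^{|T|}\chi^B(T)$ via the chain/flag correspondence, and then note that the sign $(-1)^{|T|}$ does not affect the counts and that the exceptional branch of Theorem~\ref{thm:B} cannot occur since the base-$p$ digits of $n$ are all $0$ or $1$ while $p-1\ge 2$. Your observation that the exponent $d$ appearing in the statement should read $k$ (the number of distinct powers) is a correct identification of a typo, consistent with how the paper's own proof uses $|P|=2^k-1$.
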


\begin{proof}
By Theorem~\ref{thm:B}, we have $P= \left\{ \overline U: U\subsetneqq \{p^{d_1}, \ldots, p^{d_k} \} \right\} $ with $|P|=2^k-1$, and for every $T\subseteq P$,
\[ 
r^B(T) = (-1)^{|T|}\chi^B(T)
\]
since in the definition of $r^B(T)$, a pseudo-composition $\beta\models_0 n$ satisfies $D(\beta)\subseteq T$ if and only if $D(\beta)$ gives a chain of cardinality $|D(\beta)|$ in $T$, the power $2^{n_j-\beta_{1j}}$ is either $2$ when $n_j=1$ and $\beta_{1j}=0$ or $1$ when $n_j=\beta_{1j}\in\{0,1\}$, and the multinomial coefficients involved are all equal to one.
The result then follows immediately (the case $n_0=\cdots=n_{k-1}=p-1$ does not occur since $k>1$).
\end{proof}

We have the following example when $n$ is a sum of two distinct powers of a prime $p$.

\begin{example}
Suppose $n=u+v$, where $u$ and $v$ are distinct powers of a prime $p$.
By Corollary~\ref{cor:PowersB}, we have $P=\{0,u,v\}$ and for every $T\subseteq P$, we compute $\chi^B(T)$ below.
\begin{itemize}
\item
The only chain in $T=\emptyset$ is the empty chain, so $\chi^B(\emptyset) = 1$.
\item
The only nonempty chain in $T=\{0\}$ is $0=\overline\emptyset$, so $\chi^B(\{0\}) = 1-2^2=-3$.
\item
The only nonempty chain in $T=\{u\}$ is $u$, so $\chi^B(\{u\})=1-2=-1$.
\item
The only nonempty chains in $T=\{0,u\}$ are $0$, $u$, and $0\preccurlyeq u$, so $\chi^B(\{0,u\})=1-2^2-2+2^2=-1$.
\item
The only nonempty chains in $T=\{u,v\}$ are $u$ and $v$, so $\chi^B(\{u,v\})=1-2-2=-3$.
\item
The only nonempty chains in $T=\{0,u,v\}$ are $0$, $u$, $v$, $0\preccurlyeq u$, and $0\preccurlyeq v$, so $\chi^B(\{0,u,v\}) = 1-2^2-2-2+2^2+2^2=1$.
\end{itemize}
Note that swapping $u$ and $v$ does not change $\chi^B(T)$.
It follows that $c^B_p(n)=2^{n-3}(2,3,3)$ when $p=3$, and $c^B_{p,\pm1}(n)=3\cdot2^{n-3}$, $c^B_{p,\pm3}(n)=2^{n-3}$, and $c^B_{p,i}(n)=0$ if $i\not\equiv\pm1,\pm3\pmod p$ when $p>3$.
For example, we have $c^B_3(4)=(4,6,6)$, $c^B_3(10)=(256, 384, 384)$, $c^B_5(6)=(0,24,8,8,24)$, and $c^B_7(8)=(0,96,0,32,32,0,96)$.
\end{example}

We compute $c^B_p(n)$ based on its definition for small values of $n$ and $p$ in Sage and give our data in Table~\ref{tab:B}, which agree with the results in this section; 
note that the power of $2$ given by Corollary~\ref{cor:B} may or may not be the highest in $c^B_{p,i}(n)$. 

\begin{table}[h]
\tiny
\renewcommand{\arraystretch}{1.2}
\begin{center}
\begin{tabular}{c|ccccc} 
\toprule
$n$ & $p=3$ & $p=5$ & $p=7$ & $p=11$ \\ 
\midrule
$2$ & $2(1, 1, 0)$ & $2(0, 1, 0, 1, 0)$ & $2(0, 1, 0, 1, 0, 0, 0)$ & $2(0, 1, 0, 1, 0, 0, 0, 0, 0, 0, 0)$ \\
$3$ & $2^2(0, 1, 1)$ & $2(1, 2, 1, 0, 0)$ & $2(1, 1, 0, 0, 1, 1, 0)$ & $2(1, 1, 0, 0, 0, 1, 0, 1, 0, 0, 0)$ \\
$4$ & $2(2, 3, 3)$ & $2(1, 3, 3, 1, 0)$ & $2(1, 3, 1, 2, 0, 0, 1)$ & $2(0, 2, 1, 0, 1, 0, 1, 1, 1, 1, 0)$ \\
$5$ & $2(4, 9, 3)$ & $2^4(0, 1, 0, 0, 1)$ & $2(4, 1, 4, 2, 4, 0, 1)$ & $2(3, 1, 2, 0, 0, 1, 1, 2, 1, 5, 0)$ \\
$6$ & $2^4(2, 1, 1)$ & $2^3(0, 3, 1, 1, 3)$ & $2(4, 7, 4, 6, 5, 2, 4)$ & $2(4, 7, 1, 2, 2, 3, 2, 1, 9, 1, 0)$ \\
$7$ & $2^4(2, 3, 3)$ & $2^2(6, 7, 6, 6, 7)$ & $2^6(0, 1, 0, 0, 0, 0, 1)$ & $2(7, 3, 6, 5, 7, 9, 8, 7, 5, 6, 1)$ \\
$8$ & $2(50, 39, 39)$ & $2^3(6, 8, 5, 5, 8)$ & $2^5(0, 3, 0, 1, 1, 0, 3)$ & $2(13, 13, 11, 13, 8, 10, 11, 12, 13, 10, 14)$ \\
$9$ & $2^8(0, 1, 1)$ & $2(52, 59, 49, 56, 40)$ & $2^4(4, 6, 3, 5, 5, 3, 6)$ & $2(29, 15, 29, 25, 19, 19, 26, 21, 37, 21, 15)$ \\
$10$ & $2^7(2, 3, 3)$ & $2^8(0, 1, 1, 1, 1)$ & $2^3(26, 19, 19, 13, 13, 19, 19)$ & $2(59, 46, 51, 47, 43, 47, 41, 50, 37, 51, 40)$ \\
$11$ & $2^8(2, 3, 3)$ & $2^6(6, 7, 6, 6, 7)$ & $2^2(58, 91, 62, 74, 74, 62, 91)$ & $2^{10}(0, 1, 0, 0, 0, 0, 0, 0, 0, 0, 1)$ \\
$12$ & $2^9(2, 3, 3)$ & $2^6(6, 12, 17, 17, 12)$ & $2(248, 256, 307, 337, 337, 307, 256)$ & $2^9(0, 3, 0, 1, 0, 0, 0, 0, 1, 0, 3)$ \\
$13$ & $2^7(18, 23, 23)$ & $2^2(458, 440, 355, 355, 440)$ & $2(570, 696, 516, 565, 571, 525, 653)$ & $2^8(2, 6, 0, 4, 2, 3, 3, 2, 4, 0, 6)$ \\
$14$ & $2^5(166, 173, 173)$ & $2(1523, 1775, 1647, 1567, 1680)$ & $2^{12}(0, 1, 0, 1, 1, 0, 1)$ & $2^7(14, 14, 13, 9, 10, 11, 11, 10, 9, 13, 14)$ \\
$15$ & $2^{12}(2, 3, 3)$ & $2^{12}(2, 2, 1, 1, 2)$ & $2^{10}(4, 6, 3, 5, 5, 3, 6)$ & $2^6(44, 48, 49, 40, 52, 45, 45, 52, 40, 49, 48)$ \\ 
\bottomrule
\end{tabular}
\end{center}
\caption{$c^B_p(n)$ for small values of $p$ and $n$}
\label{tab:B}
\end{table}

\section{Type D}\label{sec:D}

Now we study the ribbon numbers in type $D$. 
The reader is referred to Bj\"orner and Brenti~\cite{BjornerBrenti} for details on Coxeter groups of type $D$.

A signed permutation $w\in\SS_n^B$ is \emph{even} (resp., \emph{odd}) if the number of negative in $w(1), w(2), \ldots, w(n)$ is even (resp., odd).
The even signed permutations in $\SS_n^B$ form a subgroup $\SS_n^D$, which is generated by $s_0 := \bar 2\bar 1 3\cdots n$ (different from $s_0$ in type $B$) and the adjacent transpositions $s_1,\ldots,s_{n-1}$ with the relations
\[ \begin{cases}
s_i^2 = 1, \ 0\le i\le n-1, \\
s_0 s_2 s_0 = s_2 s_0 s_2, \\
s_is_{i+1}s_i = s_{i+1}s_is_{i+1},\ 1\le i \le n-2, \\
s_is_j = s_j s_i,\ |i-j|>1.
\end{cases} \]
Thus $\SS_n^D$ is the Coxeter group of type $D_n$ for $n\ge4$, whose Coxeter diagram is given below.
\[ \xymatrix @R=2pt @C=12pt{
s_0 \ar@{-}[rd] \\
& s_2 \ar@{-}[r] & s_3 \ar@{-}[r] & \cdots \ar@{-}[r] & s_{n-2} \ar@{-}[r] & s_{n-1} \\
s_1 \ar@{-}[ru]
} \]

We have $|\SS_n^D|=2^{n-1}n!$ since toggling the sign of $w(n)$ gives a bijection between even and odd signed permutations of $[n]$.
For each $w\in\SS_n^D$, the combinatorial interpretation of the length of $w$ in $\SS_n^D$ is slightly different from its length in $\SS_n^B$, but its descent set can be described in a similar way as in type $B$ with the convention that $w(0):=-w(2)$:
\[ D(w)= \{i\in\{0,1,\ldots,n-1: w(i)>w(i+1) \} \]

The \emph{type $D$ $0$-Hecke algebra} $H_n^D(0)$ is generated by $\pi_0, \pi_1, \ldots, \pi_2$; the relations satisfies by these generators are the same as the above relations for $s_0, s_1, \ldots, s_{n-1}$ except that $\pi_i^2 = \pi_i$ for $i=0,1,\ldots,n-1$.
Both irreducible modules and projective indecomposable modules of $H_n^D(0)$ are indexed by pseudo-compositions $\alpha\models_0 n$.
The former are all one dimensional, whereas the latter have dimensions given by the \emph{type $D$ ribbon numbers}
\[ 
r_\alpha^D := |\{w\in\SS_n^D: D(w) = D(\alpha) \}|. 
\]

To study $r_\alpha^D$, we need the following formula.

\begin{proposition}\label{prop:RibbonD}
For each pseudo-composition $\alpha$ of $n$, we have
\[ r_\alpha^D = \sum_{\beta\preccurlyeq \alpha} (-1)^{\ell(\alpha)-\ell(\beta)} \nu(\beta), \]
where
\[ \nu(\beta) :=
\begin{cases}
2^{n-1}\binom{n}{\beta}, & \text{if } \beta_1=0, \\
2^{n-1}\binom{n}{1+\beta_2,\beta_3,\ldots,\beta_\ell}, & \text{if } \beta_1=1, \\
2^{n-\beta_1}\binom{n}{\beta}, & \text{if } 1<\beta_1\le n.
\end{cases} \]
\end{proposition}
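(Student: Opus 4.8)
The plan is to mimic the inclusion-exclusion argument used in Proposition~\ref{prop:RibbonB}, but now applied to the group $\SS_n^D$ of even signed permutations. The key input will be a formula for the number of even signed permutations whose descent set is \emph{contained} in $D(\beta)$, for each pseudo-composition $\beta\models_0 n$; call this quantity $\nu(\beta)$. Once we establish that
\[ \left|\left\{ w\in\SS_n^D : D(w)\subseteq D(\beta) \right\}\right| = \nu(\beta), \]
the desired formula follows immediately by inclusion-exclusion over the Boolean lattice of pseudo-compositions refined by $\alpha$, exactly as in type $B$. So the entire content of the proof is the verification of the three-case formula for $\nu(\beta)$.

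First I would recall how the descent set behaves under the type $D$ convention $w(0):=-w(2)$. The condition $D(w)\subseteq D(\beta)$ says that $w(i)<w(i+1)$ for all $i\in\{0,1,\ldots,n-1\}\setminus D(\beta)$, where the inequality at $i=0$ reads $-w(2)<w(1)$, i.e.\ $w(1)+w(2)>0$. The strategy is to count even signed permutations satisfying these monotonicity constraints by first choosing the underlying set partition of $\{1,\ldots,n\}$ into the blocks dictated by the parts $\beta_1,\ldots,\beta_\ell$, then assigning signs, and finally checking which sign-and-value assignments are forced by the descent constraints. The three cases $\beta_1=0$, $\beta_1=1$, and $1<\beta_1\le n$ arise precisely from how the initial part $\beta_1$ interacts with the special descent at position $0$.

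The main obstacle will be handling the anomalous descent at position $0$ together with the evenness condition on the number of negative entries. When $\beta_1>1$, the first block has length at least two, the constraint at $0$ forces $w(1)<w(2)<\cdots<w(\beta_1)$ with $w(1)+w(2)>0$ inside an increasing run, and one expects the count to match the type $B$ count $2^{n-\beta_1}\binom{n}{\beta}$ because the evenness restriction and the sign freedom balance out. When $\beta_1=0$ the first part is empty, the position-$0$ descent is vacuous relative to it, and the block structure effectively coincides with a type $B$ count but with the evenness constraint halving the number of sign choices, giving $2^{n-1}\binom{n}{\beta}$. The genuinely delicate case is $\beta_1=1$: here the position-$0$ constraint $w(1)+w(2)>0$ and the constraint $w(1)<w(2)$ interact with the single-element first block, and one must show the count collapses to $2^{n-1}\binom{n}{1+\beta_2,\beta_3,\ldots,\beta_\ell}$, reflecting that the first two parts effectively merge into a single block of size $1+\beta_2$.

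I would carry out each case by a direct combinatorial bijection: fix the multiset partition of absolute values into blocks of sizes given by $\beta$, observe that within each non-initial block the increasing condition uniquely determines the arrangement of the chosen absolute values and their signs up to the freedom of choosing which values receive which signs, and then track the evenness parity. The expected punchline is that in every case the overcounting by $\SS_n^B$ versus $\SS_n^D$ (a factor of two from the parity constraint) is exactly offset or absorbed by the position-$0$ descent condition, yielding the stated piecewise formula for $\nu(\beta)$. The inclusion-exclusion step closing the proof is then routine and identical in form to Proposition~\ref{prop:RibbonB}.
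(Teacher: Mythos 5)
Your overall architecture matches the paper's: establish that $|\{w\in\SS_n^D : D(w)\subseteq D(\beta)\}| = \nu(\beta)$ for each pseudo-composition $\beta$, and then apply inclusion-exclusion exactly as in Proposition~\ref{prop:RibbonB}. The inclusion-exclusion step is indeed routine; the problem is that the three-case count, which is the entire content of the proposition, is never actually carried out. Each case ends with what you ``expect'' (``one expects the count to match\dots'', ``the expected punchline is that\dots the factor of two\dots is exactly offset or absorbed''), and expectation is not an argument. For $\beta_1=0$ you still owe a reason why exactly half of the $2^n\binom{n}{\beta}$ type-$B$ solutions are even (e.g., toggling the sign of the entry of largest absolute value is a parity-reversing involution preserving the descent constraints); for $\beta_1>1$ you owe the observation that $|w(1)|<w(2)<\cdots<w(\beta_1)$ forces $w(2),\ldots,w(\beta_1)$ to be positive while the sign of $w(1)$ is unconstrained by descents and hence uniquely determined by the evenness condition, which is what yields $2^{n-\beta_1}\binom{n}{\beta}$.

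More seriously, your setup of the case $\beta_1=1$ --- the case you yourself flag as the delicate one --- is wrong. When $\beta_1=1$ we have $1=\sigma_1\in D(\beta)$, so there is no constraint relating $w(1)$ and $w(2)$: the only conditions are $-w(2)<w(1)$ and increasingness within the later blocks, and the entries in positions $1$ and $2$ may appear in either order. Your claimed constraint ``$w(1)<w(2)$'' does not hold. If one counts with that extra constraint, the conditions collapse to $|w(1)|<w(2)<\cdots<w(1+\beta_2)$, whose count is $2^{n-1-\beta_2}\binom{n}{1+\beta_2,\beta_3,\ldots,\beta_\ell}$ --- off from the stated value of $\nu(\beta)$ by a factor of $2^{\beta_2}$. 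The correct analysis (as in the paper) splits $-w(2)<w(1)$ into the two subcases $|w(1)|<w(2)$ and $|w(2)|<w(1)$: the first contributes $2^{n-1-\beta_2}\binom{n}{1+\beta_2,\beta_3,\ldots,\beta_\ell}$, while in the second one must track, for each $i\in\{3,\ldots,1+\beta_2\}$ with $|w(i)|>w(1)$, that the sign of $w(i)$ is forced positive, yielding $(2^{\beta_2}-1)\,2^{n-1-\beta_2}\binom{n}{1+\beta_2,\beta_3,\ldots,\beta_\ell}$; only the sum of the two subcases recovers $2^{n-1}\binom{n}{1+\beta_2,\beta_3,\ldots,\beta_\ell}$. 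Without this analysis (or a substitute for it), the proposal does not prove the proposition.
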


\begin{proof}
It suffices to show that the number of signed permutations in $\SS_n^D$ with descent set contained in $D(\beta)$ is $\nu(\beta)$ for every pseudo-composition $\beta$ of $n$.
This can be proved by considering the quotient of $\SS_n^D$ by its parabolic subgroup generated by $\{s_i: i\in\{0,1,\ldots,n-1\}\setminus D(\beta)\}$.
Alternatively, the following case-by-case combinatorial argument on $|\{w\in \SS_n^D: D(w)\subseteq D(\beta)\}|$ works.

\vskip3pt\noindent \textsf{Case 1}: $\beta_1=0$.
Then $w\in\SS_n^D$ has $D(w)\subseteq D(\beta)$ if and only if 
\[ 
w(1)<\cdots<w(\beta_2),\ 
w(\beta_2+1)<\cdots<w(\beta_2+\beta_3),\ 
\cdots,\ 
w(\beta_2+\cdots+\beta_{\ell-1}+1)<\cdots<w(n). 
\]
There are $2^n \binom{n}{\beta}$ signed permutations $w\in\SS_n^B$ satisfying the above, half of which belong to $\SS_n^D$ by toggling $w(n)$.
Thus the number of signed permutations $w\in \SS_n^D$ belonging to this case is $2^{n-1} \binom{n}{\beta}$.

\vskip3pt\noindent \textsf{Case 2}: $\beta_1=1$.
Then $w\in\SS_n^D$ has $D(w)\subseteq D(\beta)$ if and only if $-w(2)<w(1)$ and
\[ 
w(2)<\cdots<w(1+\beta_2),\
w(2+\beta_2)<\cdots<w(1+\beta_2+\beta_3),\
\ldots,\
w(2+\beta_2+\cdots+\beta_{\ell-1})<\cdots<w(n). 
\]
We may replace $-w(2)<w(1)$ with $|w(1)|<w(2)$ or $|w(2)|<w(1)$.

If the former holds, then $0<|w(1)|<w(2)<\cdots<w(1+\beta_2)$ and the sign of $w(1)$ is determined by the signs of $w(i)$ for all $i>1+\beta_2$, so the number of possibilities for $w$ is 
\[ 
\frac{2^{n-1-\beta_2}n!}{(1+\beta_2)!\beta_3!\cdots\beta_\ell!}. 
\]

If the latter holds, then for each $i\in\{3,\ldots,1+\beta_2\}$ with $|w(i)|>w(1)$, we must have $w(i)>0$ (otherwise $w(i)<-w(1)<w(2)$), so the number of possibilities for $w$ is 
\[
\sum_{j=0}^{\beta_2-1} \frac{2^{n-2-j}n!}{(1+\beta_2)!\beta_3!\cdots\beta_\ell!}  
= \frac{2^{n-1-\beta_2}(2^{\beta_2}-1)n!}{(1+\beta_2)!\beta_3!\cdots\beta_\ell!}.
\]
Here $j:= \{i: |w(i)|>w(1),\ 3\le i\le 1+\beta_2\}$.

Adding the above two results, we have the number of signed permutations $w\in \SS_n^D$ belonging to this case is $2^{n-1}\binom{n}{1+\beta_2,\beta_3,\ldots,\beta_\ell}$.

\vskip3pt\noindent \textsf{Case 3}: $\beta_1>1$.
Then $w\in\SS_n^D$ has $D(w)\subseteq D(\beta)$ if and only if $-w(2)<w(1)<w(2)<\cdots<w(\beta_1)$ and
\[ 
|w(1)|<w(2)<\cdots<w(\beta_1),\
w(\beta_1+1)<\cdots<w(\beta_1+\beta_2),\
\ldots,\
w(\beta_1+\cdots+\beta_{\ell-1}+1)<\cdots<w(n). 
\]
Note that the sign of $w(1)$ is determined by the signs of $w(i)$ for all $i>\beta_1$.
Thus the number of signed permutations $w\in \SS_n^D$ belonging to this case $2^{n-\beta_1} \binom{n}{\beta}$.
\end{proof}

Using the same strategy for type A and type B, we determine the \emph{type $D$ composition dimension $p$-vector} $c^D_p(n):=\left(c^D_{p,i}(n): i\in\mathbb{Z}_p\right)$, where 
\[ 
c^D_{p,i}(n) := |\{\alpha\models_0 n: r^D_\alpha \equiv i\pmod p\}|.
\]

We first settle the case $p=2$.

\begin{corollary}
If $n\ge4$ then $c^D_2(n)=(0,2^n)$, i.e., $r_\alpha^D$ is odd for all $\alpha\models_0 n$. 
\end{corollary}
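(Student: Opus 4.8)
The plan is to mirror the proof of the corresponding type $B$ statement, reducing $r_\alpha^D$ modulo $2$ via the formula in Proposition~\ref{prop:RibbonD} and showing that exactly one summand is odd. Since $r_\alpha^D = \sum_{\beta \preccurlyeq \alpha} (-1)^{\ell(\alpha)-\ell(\beta)} \nu(\beta)$, it suffices to determine the parity of each $\nu(\beta)$ and prove that precisely one pseudo-composition $\beta \preccurlyeq \alpha$ contributes an odd term.

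First I would examine $\nu(\beta)$ in each of the three cases of its definition. For the cases $\beta_1 = 0$ and $\beta_1 = 1$, we have $\nu(\beta) = 2^{n-1}\binom{n}{\beta}$ or $2^{n-1}\binom{n}{1+\beta_2,\ldots,\beta_\ell}$, both of which carry the factor $2^{n-1}$ and are therefore even whenever $n \ge 2$ (so certainly for $n\ge 4$). For the case $1 < \beta_1 \le n$, we have $\nu(\beta) = 2^{n-\beta_1}\binom{n}{\beta}$, whose factor $2^{n-\beta_1}$ is odd exactly when $\beta_1 = n$, i.e.\ $\beta = (n)$; and in that case $\nu((n)) = 2^0 \binom{n}{n} = 1$ is odd. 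So the only pseudo-composition $\beta$ with $\nu(\beta)$ odd is $\beta = (n)$.

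Next I would observe that $\beta = (n)$ (which has empty descent set $D(\beta) = \emptyset$) satisfies $\beta \preccurlyeq \alpha$ for every pseudo-composition $\alpha$ of $n$, since $D(\beta) = \emptyset \subseteq D(\alpha)$. Hence in the sum for $r_\alpha^D$, reducing modulo $2$ kills every term except the one indexed by $\beta = (n)$, giving $r_\alpha^D \equiv \nu((n)) \equiv 1 \pmod 2$. Therefore $r_\alpha^D$ is odd for every $\alpha \models_0 n$, which yields $c^D_{2,0}(n) = 0$ and $c^D_{2,1}(n) = 2^n$ (the total number of pseudo-compositions of $n$).

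The argument is essentially routine once the parities of the three branches of $\nu(\beta)$ are settled, so I do not anticipate a genuine obstacle. The only point requiring a little care is the $\beta_1 = 1$ branch: one must confirm that the revised multinomial $\binom{n}{1+\beta_2,\beta_3,\ldots,\beta_\ell}$ does not cancel the factor $2^{n-1}$, but since $n \ge 4$ forces $n - 1 \ge 3 \ge 1$, the factor $2^{n-1}$ alone guarantees evenness regardless of the multinomial's parity. Thus every branch except $\beta = (n)$ is even, and the conclusion follows exactly as in the type $B$ case.
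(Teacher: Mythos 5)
Your proposal is correct and follows essentially the same route as the paper: both reduce the formula of Proposition~\ref{prop:RibbonD} modulo $2$ and observe that $\nu(\beta)$ is odd exactly when $\beta=(n)$, which refines every $\alpha$, so $r_\alpha^D\equiv 1\pmod 2$. Your case analysis of the three branches of $\nu(\beta)$ simply makes explicit the parity claim that the paper states without elaboration.
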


\begin{proof}
Let $\alpha$ be a pseudo-composition of $n$. 
Then $r_\alpha^D$ is odd by Proposition~\ref{prop:RibbonD}, since for each $\beta\preccurlyeq \alpha$, we have $\nu(\beta)$ is odd if and only if $\beta_1=n$, i.e., $\beta=(n)$.
\end{proof}

From now on we may assume that $p$ is an odd prime.

\begin{theorem}\label{thm:D}
Let $p$ be an odd prime and $n\ge4$ an integer with $[n]_p=(n_0,n_1,\ldots,n_k)$.
For each pseudo-composition $\beta=(\beta_1,\ldots,\beta_\ell)$ of $n$ with $[\beta_i]_p = (\beta_{i0}, \beta_{i1}, \ldots, \beta_{id})$, 
\begin{itemize}
\item
if $\beta_1=0$ then define $\beta':=\beta$ and $\nu_p(\beta'):= \frac12\prod_{j=0}^d 2^{n_j} \binom{n_j}{\beta_{1j}, \ldots, \beta_{\ell j}}$;
\item
if $\beta_1=1$ then define $\beta':=(0,1+\beta_2, \beta_3,\ldots, \beta_\ell) \models_0 n$ 
and $\nu_p(\beta')$ as in the last case; 
\item
if $\beta_1>1$ then define $\beta':=\beta$ and $\nu_p(\beta') := \prod_{j=0}^d 2^{n_j-\beta_{1j}} \binom{n_j}{\beta_{1j}, \ldots, \beta_{\ell j}}$.
\end{itemize}
For each $T\subseteq P :=  \left\{ b_0+b_1 p+\cdots+b_k p^d: 0\le b_j\le n_j,\ j=0,1,\ldots,k \right \} \cup \{1\} \setminus \{n\} \subseteq \{0,1,\ldots,n-1\}$, let
\[ 
r^D(T) := \sum_{ \substack{ \beta\models_0 n,\ D(\beta) \subseteq T \\ 
\beta'_{1j}+\cdots+\beta'_{\ell j} = n_j,\ \forall j}}
(-1)^{|T|-|D(\beta)|} \nu_p(\beta').
\]
Then $|P|=(n_0+1)\cdots(n_k+1)$ if $b_0=0$ or $|P|=(n_0+1)\cdots(n_k+1)-1$ of $b_0>0$, and 
\[ c^D_{p,i}(n) = 
\begin{cases}
2^{n-|P|} \left| \{ T\subseteq P: r^D(T)\equiv i\pmod p \} \right| & \text{if $i=0$ or $n_0=\cdots=n_{k-1}=p-1$} \\
2^{n-|P|-1} \left| \{ T\subseteq P: r^D(T)\equiv \pm i\pmod p \} \right| & \text{otherwise}.
\end{cases} \]
\end{theorem}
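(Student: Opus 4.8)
The plan is to mirror the proofs of Theorem~\ref{thm:A} and Theorem~\ref{thm:B}, replacing the ribbon-number formula by Proposition~\ref{prop:RibbonD}. Fix a pseudo-composition $\alpha$ of $n$ with binary string $a = a_0 a_1 \cdots a_n$ and $a_n = 1$, and expand $r^D_\alpha = \sum_{\beta \preccurlyeq \alpha} (-1)^{\ell(\alpha) - \ell(\beta)} \nu(\beta)$ using Proposition~\ref{prop:RibbonD}. The first task is to reduce each summand $\nu(\beta)$ modulo $p$. Since $2^{p^j} \equiv 2 \pmod p$ for all $j \ge 0$ by Fermat's little theorem, we have $2^{n} \equiv \prod_{j=0}^k 2^{n_j}$ and $2^{n - \beta_1} \equiv \prod_{j=0}^k 2^{n_j - \beta_{1j}} \pmod p$; combining these with Dickson's Theorem~\ref{thm:Dickson} applied to the multinomial coefficient in each of the three cases of $\nu(\beta)$ shows that $\nu(\beta) \equiv \nu_p(\beta') \pmod p$, where $\beta'$ and $\nu_p(\beta')$ are as defined in the statement. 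In particular the two cases $\beta_1 = 0$ and $\beta_1 = 1$ both produce the common factor $2^{n-1} \equiv \frac12 \prod_{j=0}^k 2^{n_j} \pmod p$, and merging the first two parts when $\beta_1 = 1$ turns the multinomial into the one indexed by $\beta' = (0, 1+\beta_2, \beta_3, \ldots, \beta_\ell)$; this is exactly what the map $\beta \mapsto \beta'$ records.

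Next I would pin down where the surviving descents lie. Write $Q := \{ b_0 + b_1 p + \cdots + b_k p^k : 0 \le b_j \le n_j,\ j = 0, \ldots, k \}$, so that $P = (Q \cup \{1\}) \setminus \{n\}$. By Theorem~\ref{thm:Dickson}, $\nu_p(\beta') \not\equiv 0 \pmod p$ forces $\beta'_{1j} + \cdots + \beta'_{\ell j} = n_j$ for all $j$, so every partial sum occurring as a descent of $\beta'$ has the form $\sum_{j} b_j p^j$ with $0 \le b_j \le n_j$ and hence lies in $Q$. When $\beta_1 = 0$ or $\beta_1 > 1$ we have $\beta' = \beta$, so $D(\beta) = D(\beta') \subseteq Q$. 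The delicate case is $\beta_1 = 1$: here $D(\beta) = \{1\} \cup (D(\beta') \setminus \{0\})$, so the descent at position $1$ need not lie in $Q$ when $n_0 = 0$. This is precisely why $P$ is obtained from $Q$ by adjoining $\{1\}$; with this adjustment $D(\beta) \subseteq P$ for every surviving $\beta$. Counting then gives $|P| = (n_0+1)\cdots(n_k+1)$ when $n_0 = 0$ (adjoining $1$ compensates for deleting $n$) and $|P| = (n_0+1)\cdots(n_k+1) - 1$ when $n_0 > 0$ (since then $1 \in Q$ already).

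With the surviving descents confined to $P$, the residue of $r^D_\alpha$ modulo $p$ depends only on the substring $\hat a = (a_r : r \in P)$. Setting $T := \supp(\hat a) = D(\alpha) \cap P$ and using $\ell(\alpha) - \ell(\beta) = |D(\alpha)| - |D(\beta)|$ with $D(\beta) \subseteq T$, I would factor the sign as $(-1)^{\ell(\alpha)-\ell(\beta)} = (-1)^{|D(\alpha) \setminus P|} (-1)^{|T| - |D(\beta)|}$ to obtain $r^D_\alpha \equiv (-1)^{|D(\alpha) \setminus P|} r^D(T) \pmod p$. For each fixed $T$ there are $2^{n-|P|}$ pseudo-compositions $\alpha$ with $\supp(\hat a) = T$, corresponding to the free choices of $a_j$ for $j \in \{0, 1, \ldots, n-1\} \setminus P$. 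Unless $P = \{0, 1, \ldots, n-1\}$, equivalently $n_0 = \cdots = n_{k-1} = p-1$, there is at least one free position, so toggling it is a sign-reversing involution splitting these $\alpha$ into equal halves by the parity of $|D(\alpha) \setminus P|$; one half contributes $r^D(T)$ and the other $-r^D(T)$. Summing over $T$ yields the two displayed cases exactly as in the previous theorems: for $i = 0$, or in the exceptional case with no free positions, each relevant $T$ supplies all $2^{n-|P|}$ of its pseudo-compositions, whereas for $i \ne 0$ each half of size $2^{n-|P|-1}$ is matched against $r^D(T) \equiv i$ or $r^D(T) \equiv -i$.

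I expect the main obstacle to be the uniform bookkeeping of the case $\beta_1 = 1$: one must verify at once that $\nu(\beta) \equiv \nu_p(\beta')$ with the merged multinomial, justify adjoining $\{1\}$ to $P$ so that the condition $D(\beta) \subseteq T$ is meaningful, and confirm that the resulting map $\beta \mapsto \beta'$, although not injective, causes no double counting. The last point is harmless because the pseudo-compositions with $\beta_1 = 0$ and with $\beta_1 = 1$ that share a common image $\beta'$ are distinguished by whether $0$ or $1$ belongs to $D(\beta)$, and are therefore tracked by different subsets $T$ inside the definition of $r^D(T)$. The remaining steps are routine adaptations of the type $A$ and type $B$ arguments.
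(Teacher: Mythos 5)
Your proposal is correct and follows essentially the same route as the paper's proof: reduce $\nu(\beta)$ modulo $p$ via Fermat's little theorem and Dickson's theorem (with the $\beta\mapsto\beta'$ bookkeeping for $\beta_1\in\{0,1\}$), observe that surviving descent sets lie in $P$, and count pseudo-compositions sharing a substring $\hat a$ by toggling the free positions. Your treatment is in fact slightly more explicit than the paper's on two points it leaves implicit --- the sign factorization $(-1)^{\ell(\alpha)-\ell(\beta)}=(-1)^{|D(\alpha)\setminus P|}(-1)^{|T|-|D(\beta)|}$ and the reason $\{1\}$ must be adjoined to $P$ --- but the argument is the same.
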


\begin{proof}
We determine $c^D_p(n)$ by using Proposition~\ref{prop:RibbonD} to reduce $r^D_\alpha$ modulo $p$ for an arbitrary pseudo-composition $\alpha$ of $n$.
Let $a=a_0a_1\cdots a_n$ be the binary string corresponding to $\alpha$, that is, $a_i=1$ if and only if $i\in D(\alpha)\cup \{n\}$.
We have $\beta \preccurlyeq \alpha$ if and only if $a_r=1$ for all $r$ in
\[ 
\left\{ \sum_{i=1}^s \sum_{j=0}^d \beta_{ij}p^j: s=1,\ldots\ell-1 \right\}. 
\]
If $\nu(\beta)\not\equiv 0\pmod p$, then $\beta'_{1j}+\cdots+\beta'_{\ell j} = n_j$ for all $j=0,1,\ldots,k$ by Theorem~\ref{thm:Dickson}, and this implies $D(\beta)\subseteq P$.
Thus to find which pseudo-compositions $\beta$ with $\nu(\beta)\not\equiv 0\pmod p$ are refined by $\alpha$, it suffices to look at the substring $\hat a := (a_r: r\in P)$ of $a$.
It is easy to check that $P$ is a subset of $\{0,1,\ldots,n-1\}$ with 
\[ |P|=\begin{cases}
(n_0+1)\cdots(n_k+1), & \text{if $1\notin P$, i.e., $b_0=0$}, \\
(n_0+1)\cdots(n_k+1)-1, & \text{if $1\in P$, i.e., $b_0>0$}.
\end{cases} \] 

Let $b$ be a fixed binary string indexed by $P$ with $\supp(b) := \{r\in P: b_r=1\}=T$, and suppose $\hat a = b$.
We have exactly $2^{n-|P|}$ possibilities for $\alpha$, half of which have even lengths by toggling $a_j$ for some $j\in\{0,1,\ldots,n-1\}\setminus P$ unless $P=\{0,1,\ldots,n-1\}$, i.e., $n_0=\cdots=n_{k-1}=p-1$.
By Fermat's little theorem, we have
\[
n-1 = -1 + \sum_{j=0}^d n_j p^j \
\Longrightarrow \ 
2^{n-1} \equiv \frac12 \prod_{j=0}^d 2^{n_j} \pmod p,
\]
\[
n-\beta_1 = \sum_{j=0}^d (n_j-\beta_{1j})p^j
\ \Longrightarrow\
2^{n-\beta_1} \equiv \prod_{j=0}^d 2^{n_j-\beta_{1j}} \pmod p. \]
Combining this with Proposition~\ref{prop:RibbonD} and Theorem~\ref{thm:Dickson} we obtain
\[
r^D_\alpha \equiv \sum_{ \substack{ \beta\models_0 n,\ D(\beta) \subseteq T \\ 
\beta'_{1j}+\cdots+\beta'_{\ell j} = n_j,\ \forall j}}
(-1)^{\ell(\alpha)-\ell(\beta)} \nu_p(\beta')
\equiv r^D(T) \pmod p.
\]
The result follows.
\end{proof}

We derive some consequences of Theorem~\ref{thm:B} below.

\begin{corollary}\label{cor:D}
For all $i\in\mathbb{Z}_p$, we have $c^D_{p,i}(n)=c^D_{p,-i}(n)$ unless $n_0=\cdots=n_{k-1}=p-1$ and 
\[ \text{$c^D_{p,i}(n)$ is divisible by }
\begin{cases}
2^{n+2-(n_0+1)\cdots(n_k+1)}, & \text{ if ($n_0>0$ and $i=0$) or $n_0=\cdots=n_{k-1}=p-1$}, \\
2^{n-(n_0+1)\cdots(n_k+1)}, & \text{ if $n_0=0$ and $i\ne 0$}, \\
2^{n+1-(n_0+1)\cdots(n_k+1)}, & \text{otherwise}.
\end{cases} \]
\end{corollary}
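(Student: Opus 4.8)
The plan is to mirror the proofs of Corollaries~\ref{cor:A} and~\ref{cor:B}: the reflection $i\mapsto-i$ comes for free from the case structure of Theorem~\ref{thm:D}, while the divisibility by a power of $2$ comes from the complementation symmetry of ribbon numbers. The one feature special to type $D$ is that the exponent of $2$ recorded in Theorem~\ref{thm:D} is governed by $|P|$, which takes two different values according to whether $n_0=0$ or $n_0>0$; this is the only place where extra care is needed.

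First I would dispose of the symmetry statement. Whenever $n_0=\cdots=n_{k-1}=p-1$ fails, Theorem~\ref{thm:D} expresses $c^D_{p,i}(n)$ for $i\neq0$ through $|\{T\subseteq P: r^D(T)\equiv\pm i\pmod p\}|$, and this quantity is unchanged under $i\mapsto-i$; the case $i=0$ is automatic. Hence $c^D_{p,i}(n)=c^D_{p,-i}(n)$ for all $i$ outside the exceptional range.

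For the divisibility, I would invoke the longest element $w_0$ of $\SS_n^D$, which gives $D(w_0w)=S\setminus D(w)$ and therefore the symmetry $r_I^S=r_{S\setminus I}^S$ recalled in Section~\ref{sec:prelim}. Feeding this through the congruence $r^D_\alpha\equiv r^D(D(\alpha)\cap P)\pmod p$ established inside the proof of Theorem~\ref{thm:D}, and using that complementing the descent set of $\alpha$ sends $T:=D(\alpha)\cap P$ to $P\setminus T$, I obtain $r^D(T)\equiv r^D(P\setminus T)\pmod p$ for every $T\subseteq P$. Since $n\ge4$ guarantees $P\neq\emptyset$, the involution $T\mapsto P\setminus T$ has no fixed points, so it partitions the subsets of $P$ into two-element orbits on which $r^D$ is constant modulo $p$. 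Thus both $|\{T:r^D(T)\equiv i\}|$ and $|\{T:r^D(T)\equiv\pm i\}|$ are even, supplying one extra factor of $2$ on top of the exponent in Theorem~\ref{thm:D}.

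The last step, which is where the bookkeeping lives, is to combine this extra $2$ with the correct value of $|P|$. The integer $1$ already belongs to $\{b_0+\cdots+b_kp^k:0\le b_j\le n_j\}$ exactly when $n_0>0$, so $|P|=(n_0+1)\cdots(n_k+1)-1$ if $n_0>0$ and $|P|=(n_0+1)\cdots(n_k+1)$ if $n_0=0$. Substituting these into $2^{n-|P|+1}$ on the $i=0$/exceptional branch and into $2^{n-|P|}$ on the $\pm i$ branch, then sorting by the sign of $n_0$ and the value of $i$, produces the three stated bounds; in particular $n_0=p-1>0$ throughout the exceptional range, which lands it in the first bound. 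I expect no genuine obstacle here beyond careful case separation, exactly as in the analogous but simpler computation in Corollaries~\ref{cor:A} and~\ref{cor:B}.
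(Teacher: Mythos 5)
Your proposal is correct and takes essentially the same route as the paper's proof: the symmetry $c^D_{p,i}(n)=c^D_{p,-i}(n)$ is read off directly from the case structure of Theorem~\ref{thm:D}, the extra factor of $2$ comes from $r^D(T)\equiv r^D(P\setminus T)\pmod p$ (a consequence of $r_I^S=r_{S\setminus I}^S$), and the final bookkeeping uses $|P|=(n_0+1)\cdots(n_k+1)-1$ when $n_0>0$ versus $|P|=(n_0+1)\cdots(n_k+1)$ when $n_0=0$. Your write-up is merely more explicit than the paper's about the fixed-point-free involution $T\mapsto P\setminus T$ and about how the congruence $r^D(T)\equiv r^D(P\setminus T)$ is derived, but the underlying argument is identical.
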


\begin{proof}
Theorem~\ref{thm:D} immediately implies that $c^D_{p,i}(n)=c^D_{p,-i}(n)$ for all $i\in\mathbb{Z}_p$ unless $n_0=\cdots=n_{k-1}=p-1$.
The symmetry $r_I^S=r_{S\setminus I}^S$ for the ribbon numbers of a finite Coxeter system $(W,S)$ mentioned in Section~\ref{sec:prelim} implies $r(T) \equiv r(P\setminus T) \pmod p$.
Thus $c^D_{p,i}(n)$ is divisible by $2^{n-|P|+1}$ if $i=0$ or $n_0=\cdots=n_{k-1}=p-1$ or divisible by $2^{n-|P|}$ otherwise.
We also have $|P|=(n_0+1)\cdots(n_k+1)$ if $n_0=0$ or $|P|=(n_0+1)\cdots(n_k+1)-1$ otherwise.
The result follows.
\end{proof}

We can make Theorem~\ref{thm:D} more explicit in some special situations.
We begin with case when $n$ is a small multiple of a power of $p$.

\begin{corollary}
Let $p$ be an odd prime. The following holds for $c^D_p(n)$, where $d>0$ is an integer.
\begin{itemize}
\item[(i)]
If $n=p^d$ then $c^D_{p,0}(n) = 2^{n-1}$, $c^D_{p,\pm1}(n) = 2^{n-2}$, and $c^D_{n,i}(n) = 0$ for all $i\not\equiv 0,\pm1\pmod p$.
\item[(ii)]
If $n=2p^d$ then $c^D_{p,\pm1}(n)=3\cdot 2^{n-3}$, $c^D_{p,\pm3}(n)=2^{n-3}$, and $c^D_{n,i}(n) = 0$ for all $i\not\equiv 0,\pm1,\pm3\pmod p$ when $p>3$ and $c^D_3(n)=(2^{n-2},3\cdot 2^{n-3},3\cdot 2^{n-3})$.
\item[(iii)]
If $n=3p^d$ and $p=5$ then $c^D_5(n)=(2^{n-3},2^{n-3},5\cdot2^{n-4},5\cdot2^{n-4},2^{n-3})$.

\noindent
If $n=3p^d$ and $p=7$ then $c^D_7(n)=(2^{n-3},2^{n-3},5\cdot2^{n-4},5\cdot2^{n-4},2^{n-3})$.

\noindent
If $n=3p^d$ and $p=11$ then $c^D_{11}(n)=(2^{n-3},2^{n-4},0,2^{n-2},2^{n-4},2^{n-4},2^{n-4},2^{n-4},2^{n-2},0,2^{n-4})$.

\noindent
If $n=3p^d$ and $p>11$ then $c^D_{p,\pm1}(n) = 2^{n-4}$, $c^D_{p,\pm3}(n)=2^{n-2}$, $c^D_{p,\pm5}(n)=2^{n-4}$, $c^D_{p,\pm7}(n)=2^{n-4}$, $c^D_{p,\pm11}(n)=2^{n-4}$, and $c^D_{p,i}(n)=0$ for all $i\not\equiv \pm1,\pm3,\pm5,\pm7,\pm11\pmod p$.
\end{itemize}
\end{corollary}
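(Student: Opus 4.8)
The plan is to specialize Theorem~\ref{thm:D} to $n = mp^d$ for $m \in \{1,2,3\}$ and $d > 0$, and then carry out the resulting finite computation. Since $d > 0$ forces $[n]_p = (0,\dots,0,m)$ with the entry $m$ in position $d$, the set $P$ of Theorem~\ref{thm:D} specializes to
\[ P = \{0,\,1,\,p^d,\,2p^d,\,\dots,\,(m-1)p^d\}, \]
of cardinality $m+1$; here the element $1$, contributed by the extra $\cup\{1\}$ in the definition of $P$, is distinct from every multiple of $p^d$ precisely because $d > 0$. As $n_0 = 0 \neq p-1$ for every odd prime $p$, the exceptional case $n_0 = \cdots = n_{k-1} = p-1$ of Theorem~\ref{thm:D} never arises, so with $|P| = m+1$ the task reduces to evaluating $r^D(T)$ for each of the $2^{m+1}$ subsets $T \subseteq P$, tallying the residues modulo $p$, and reading off $c^D_{p,i}(n)$ from the two-line formula.

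First I would classify the pseudo-compositions $\beta \models_0 n$ that actually contribute to $r^D(T)$, namely those with $D(\beta) \subseteq T$ and $\nu_p(\beta') \not\equiv 0 \pmod p$. By Theorem~\ref{thm:Dickson} and the digit constraints $\beta'_{1j} + \cdots + \beta'_{\ell j} = n_j$, every contributing $\beta$ has all parts equal to multiples of $p^d$, the one exception being the type-$D$ case $\beta_1 = 1$, where $\beta_2 = c\,p^d - 1$ so that $\beta' = (0,\,c\,p^d,\dots)$ again has parts divisible by $p^d$. Hence the contributing $\beta$ are in bijection with compositions of $m$ together with a choice of first-part type ($\beta_1 = 0$, $\beta_1 = 1$, or $\beta_1 > 1$), and each $\nu_p(\beta')$ collapses to a power of $2$ times a multinomial coefficient of $m$, which for $m \le 3$ can be listed by hand. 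This is exactly where the distinguished element $1 \in P$ earns its place: it records whether a descent of the form $\beta_1 = 1$ is present, which is why $|P| = m+1$ here, one larger than the value $m$ in the type~$B$ analogue (Corollary~\ref{cor:PowerB}).

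With the contributing $\beta$ and their values $\nu_p(\beta')$ in hand, I would compute $r^D(T) = \sum_\beta (-1)^{|T|-|D(\beta)|}\nu_p(\beta')$ directly for every $T \subseteq P$. For $n = p^d$ this is a four-subset check yielding the values $1,0,0,-1$; for $n = 2p^d$ the eight subsets yield values in $\{\pm 1, \pm 3\}$; and for $n = 3p^d$ the sixteen subsets yield values in $\{\pm 1, \pm 3, \pm 5, \pm 7, \pm 11\}$. Feeding the multiplicity of each residue class into Theorem~\ref{thm:D} then gives the stated vectors: for $i = 0$ one multiplies $|\{T : r^D(T) \equiv 0\}|$ by $2^{n-m-1}$, and for $i \neq 0$ one multiplies $|\{T : r^D(T) \equiv \pm i\}|$ by $2^{n-m-2}$.

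The main obstacle is the bookkeeping in the case $n = 3p^d$. Here the integer values of $r^D(T)$ range over $\{\pm 1, \pm 3, \pm 5, \pm 7, \pm 11\}$, and the several sub-cases $p = 5$, $p = 7$, $p = 11$, and $p > 11$ arise solely from residue coincidences modulo the small primes: for instance $5 \equiv 0 \pmod 5$, $7 \equiv 0 \pmod 7$, $11 \equiv 0 \pmod{11}$, and congruences such as $11 \equiv -3 \pmod 7$ merge residue classes that are distinct for $p > 11$. The delicate parts are enumerating all sixteen subsets without omitting the $\beta_1 = 1$ contributions, using the involution $T \mapsto P \setminus T$ (which, since $n - |P|$ is odd here, sends $r^D(T)$ to $-r^D(P \setminus T)$ modulo $p$) as a consistency check that reproduces $c^D_{p,i} = c^D_{p,-i}$ from Corollary~\ref{cor:D}, and then resolving the collisions prime by prime. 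Once the contributing $\beta$ are correctly classified, the remaining reduction is entirely mechanical.
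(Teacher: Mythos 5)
Your proposal is correct and takes essentially the same route as the paper's proof: specialize Theorem~\ref{thm:D} to $P=\{0,1,p^d,2p^d,\ldots,(m-1)p^d\}$ with $|P|=m+1$, use Theorem~\ref{thm:Dickson} and the digit constraints to show the contributing pseudo-compositions $\beta$ are exactly those whose $\beta'$ has all parts divisible by $p^d$ (handling $\beta_1=1$ via $\beta_2=cp^d-1$), evaluate $r^D(T)$ for all $2^{m+1}$ subsets, and tally residues with the weights $2^{n-m-1}$ (for $i=0$) and $2^{n-m-2}$ (for $i\neq0$). The intermediate data you assert --- the values $1,0,0,-1$ for $n=p^d$, the value sets $\{\pm1,\pm3\}$ and $\{\pm1,\pm3,\pm5,\pm7,\pm11\}$ for $n=2p^d$ and $n=3p^d$, and the sign-reversing symmetry $r^D(T)\equiv -r^D(P\setminus T)\pmod p$ --- all agree with the paper's explicit case tables, so the remaining bookkeeping is exactly the paper's computation.
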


\begin{proof} 
We apply Theorem~\ref{thm:D} to the following cases.

(i) Suppose $n=p^d$ for some integer $d>0$.
We compute $r^D(T)$ for every $T\subseteq P:=\{0,1\}$.
For each $\beta$ in the definition of $r^D(T)$, we have
\[ \nu_p(\beta')=
\begin{cases}
1 & \text{if } \beta\in\{(n)\}; \\
1-1-1=-1 & \text{if } \beta\in\{(0,n), (1,n-1)\}.
\end{cases} \]
Thus 
\[ r^D(T) =  
\begin{cases}
1-1-1=-1, & \text{if } T=\{0,1\};\\
(-1)(1-1)=0, & \text{if } T=\{0\} \text{ of } \{1\}; \\
1, & \text{if } T= \emptyset. 
\end{cases} \]
It follows that $c^D_{p,0}(n) = 2^{n-1}$, $c^D_{p,\pm1}(n) = 2^{n-2}$, and $c^D_{n,i}(n) = 0$ for all $i\not\equiv 0,\pm1\pmod p$.

(ii) Suppose $n=2p^d$ for some integer $d>0$.
We compute $r^D(T)$ for every $T\subseteq P:=\{0,1,p^d\}$.
For each $\beta$ in the definition of $r^D(T)$, we have
\[ \nu_p(\beta')=
\begin{cases}
1 & \text{if } \beta\in\{(n)\}; \\
2 & \text{if } \beta\in\{(0,n), (1,n-1)\}; \\
4 & \text{if } \beta\in\{(0,p^d,p^d), (1,p^d-1,p^d), (p^d,p^d)\}.
\end{cases} \]
Thus
\[ r^D(T) =  
\begin{cases}
(-1)(1-2-2-4+4+4)=-1, & \text{if } T=\{0,1,p^d\};\\
1-2-2=-3, & \text{if } T=\{0,1\}; \\
1-2-4+4 = -1, & \text{if } T= \{0,p^d\} \text{ or }\{1,p^d\}; \\
(-1)(1-2) = 1, & \text{if } T= \{0\} \text{ or }\{1\}; \\
(-1)(1-4) = 3, & \text{if } T= \{p^d\}; \\
1, & \text{if } T= \emptyset.
\end{cases} \]
It follows that $c^D_{p,0}(n) = 2^{n-2}$, $c^D_{p,\pm1}(n)=3\cdot 2^{n-3}$, and $c^D_{n,i}(n) = 0$ for all $i\not\equiv 0,\pm1\pmod p$ if $p=3$ and $c^D_{p,\pm1}(n)=3\cdot 2^{n-3}$, $c^D_{p,\pm3}(n)=2^{n-3}$, and $c^D_{n,i}(n) = 0$ for all $i\not\equiv 0,\pm1,\pm3\pmod p$ if $p>3$.

(iii) Suppose $n=3p^d$ for some integer $d>0$.
We compute $r^D(T)$ for every $T\subseteq P:=\{0,1,p^d,2p^d\}$.
For each $\beta$ in the definition of $r^D(T)$, we have
\[ \nu_p(\beta')=
\begin{cases}
1 & \text{if } \beta\in\{(n)\}; \\
4 & \text{if } \beta\in\{(0,n), (1,n-1)\}; \\
6 & \text{if } \beta\in\{(2p^d,p^d)\}; \\
12 & \text{if } \beta\in\{(0,p^d,2p^d), (0,2p^d,p^d),(1,p^d-1,2p^d), (1,2p^d-1,p^d), (p^d,2p^d)\}; \\
24 & \text{if } \beta\in\{(0,p^d,p^d,p^d), (1,p^d-1,p^d,p^d), (p^d,p^d,p^d)\}.
\end{cases} \]
Thus
\[ r^D(T) =  
\begin{cases}
1-4\cdot2-6-12+12\cdot4+24-24\cdot2=-1, & \text{if } T=\{0,1,p^d,2p^d\};\\
(-1)(1-4\cdot2-12+12\cdot2)=-5, & \text{if } T=\{0,1,p^d\}; \\
(-1)(1-4\cdot2-6+12\cdot2)=-11, & \text{if } T=\{0,1,2p^d\}; \\
(-1)(1-4-6-12+12\cdot2+24-24) = -3, & \text{if } T= \{0,p^d,2p^d\} \text{ or }\{1,p^d,2p^d\}; \\
1-4-4=-7, & \text{if } T=\{0,1\}; \\
1-6-12+24=7, & \text{if } T=\{p^d,2p^d\}; \\
1-4-12+12=-3, & \text{if } T= \{0,p^d\} \text{ or }\{1,p^d\}; \\
1-4-6+12=3, & \text{if } T= \{0,2p^d\} \text{ or }\{1,2p^d\}; \\
(-1)(1-4)=3, & \text{if } T= \{0\} \text{ or }\{1\}; \\
(-1)(1-12)=11, & \text{if } T= \{p^d\}; \\
(-1)(1-6)=5, & \text{if } T= \{2p^d\}; \\
1, & \text{if } T= \emptyset.
\end{cases} \]
The result on $c^D_p(n)$ follows.
\end{proof}

Next, we study the case when $n$ is a sum of two distinct powers of $p$.

\begin{corollary}
Let $p$ be an odd prime. The following holds for $c^D_p(n)$.
\begin{itemize}
\item[(i)]
If $n=1+p^d$ for some integer $d>0$ then $c^D_{p,\pm1}(n) = 2^{n-1}$ and $c^D_{p,i}(n)=0$ for all $i\not\equiv \pm1\pmod p$.
\item[(ii)]
If $n$ is the sum of two distinct positive powers of $p$ then 
$c^D_{p,\pm1}(n) = 7\cdot 2^{n-4}$, $c^D_{p,\pm3}(n) = 2^{n-4}$, and $c^D_{p,i}(n)=0$ for all $i\not\equiv \pm1, \pm3\pmod p$ when $p>3$ and $c^D_p(n)=(2^{n-3},7\cdot 2^{n-4}, 7\cdot 2^{n-4})$ when $p=3$.
\end{itemize}
\end{corollary}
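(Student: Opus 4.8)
The plan is to apply Theorem~\ref{thm:D} directly, following the same template used for the earlier type~$B$ and type~$D$ corollaries: read off $[n]_p$, write down $P$, compute $r^D(T)$ for every $T\subseteq P$, tabulate the residues, and feed the tally into the counting formula. In case~(i), where $n=1+p^d$, the nonzero digits are $n_0=n_d=1$, so $P=\{0,1,p^d\}$ with $|P|=3$; in case~(ii), where $n=p^e+p^d$ with $1\le e<d$, we have $n_e=n_d=1$ and $n_0=0$, so $P=\{0,1,p^e,p^d\}$ with $|P|=4$. Since $p$ is odd and $n_0\in\{0,1\}\ne p-1$, the exceptional hypothesis $n_0=\cdots=n_{k-1}=p-1$ never holds, so we stay in the generic branch: $c^D_{p,0}(n)=2^{n-|P|}|\{T:r^D(T)\equiv 0\}|$ and $c^D_{p,i}(n)=2^{n-|P|-1}|\{T:r^D(T)\equiv\pm i\}|$ for $i\ne0$.

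The core of the work is computing $r^D(T)$, which amounts to deciding which pseudo-compositions survive the Dickson condition $\nu_p(\beta')\ne0$. Because the only nonzero digits of $n$ sit at positions $e$ and $d$ (positions $0$ and $d$ in case~(i)), the no-carry requirement forces each part of $\beta'$ to be a sum of a subset of $\{p^e,p^d\}$, with exactly one part supplying the $p^e$ and one supplying the $p^d$. First I would list the surviving $\beta'$ --- namely $(n)$, together with the clean vector compositions $(0,n)$, $(p^e,p^d)$, $(p^d,p^e)$ and their refinements by a leading $0$ --- and record that $\nu_p((n))=1$ while every other surviving $\beta'$ has $\nu_p=2$, using $\prod_j 2^{n_j}=4$ and the Fermat reductions $2^{n-1}\equiv\tfrac12\prod_j 2^{n_j}$, $2^{n-\beta_1}\equiv\prod_j 2^{n_j-\beta_{1j}}\pmod p$ from the proof of Theorem~\ref{thm:D}. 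Then I would translate back through the bijection $S\mapsto\beta_S$ between subsets of $P$ and pseudo-compositions, carefully tracking the folding $\beta\mapsto\beta'=(0,1+\beta_2,\ldots)$ when $\beta_1=1$, which sends, e.g., the descent sets $\{1\}$ and $\{0\}$ to the same $\beta'=(0,n)$.

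The main obstacle --- and the only genuinely delicate point --- is recognizing which subsets $T$ force a vanishing contribution. Two mechanisms kill a pseudo-composition: a part equal to $1$, which injects a forbidden digit in position $0$ (since $n_0=0$ in case~(ii)), and a part of the form $p^d-p^e$, which carries and produces digits $p-1$ in positions $e,\ldots,d-1$, violating the no-carry condition. Consequently $\{0,1\}$ and $\{p^e,p^d\}$, as well as all their further extensions, contribute $\nu_p=0$ for the "extra" refinements, which is exactly why $r^D(\{0,1\})=r^D(\{p^e,p^d\})=1-2-2=-3$, while the four mixed pairs give $-1$ and every remaining $T$ gives $+1$. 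Tabulating over the $16$ subsets should yield $r^D(T)\equiv 1$ for $10$ subsets, $\equiv -1$ for $4$, and $\equiv -3$ for $2$; for case~(i) the analogous tally over the $8$ subsets is four $+1$'s and four $-1$'s, with only the top cell $\{0,1,p^d\}$ vanishing.

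Feeding this distribution into Theorem~\ref{thm:D} finishes the proof, with a single split on $p$. When $p>3$ the residues $1,-1,-3$ are pairwise distinct and nonzero, giving $c^D_{p,\pm1}(n)=2^{n-5}(10+4)=7\cdot 2^{n-4}$, $c^D_{p,\pm3}(n)=2^{n-5}\cdot 2=2^{n-4}$, and $0$ for all $i\not\equiv\pm1,\pm3$; when $p=3$ the collision $-3\equiv 0$ moves those two subsets into the $i=0$ count, producing $c^D_{3,0}(n)=2^{n-4}\cdot 2=2^{n-3}$ and $c^D_{3,\pm1}(n)=7\cdot 2^{n-4}$, i.e.\ the stated vector $(2^{n-3},7\cdot 2^{n-4},7\cdot 2^{n-4})$. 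Case~(i) is the same bookkeeping: all eight subsets satisfy $r^D(T)\equiv\pm1$, so $c^D_{p,\pm1}(n)=2^{n-4}\cdot 8=2^{n-1}$ and the other classes are empty. Throughout, the symmetry $r^D(T)\equiv r^D(P\setminus T)$ from Corollary~\ref{cor:D} serves as a convenient consistency check on the tabulation.
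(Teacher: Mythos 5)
Your proposal is correct and takes essentially the same route as the paper: apply Theorem~\ref{thm:D} with $P=\{0,1,p^d\}$ in case (i) and $P=\{0,1,p^e,p^d\}$ in case (ii), list the surviving $\beta'$ (tracking the folding $\beta\mapsto\beta'$ when $\beta_1=1$ and the two killing mechanisms, a stray part $1$ and a carrying part $p^d-p^e$), and your resulting tallies ($10$ subsets with $r^D\equiv 1$, $4$ with $-1$, $2$ with $-3$ in case (ii); four $+1$'s and four $-1$'s in case (i)) and final counts match the paper's computation exactly, including the $p=3$ collision $-3\equiv 0$. One aside is wrong but harmless: in case (i) the top cell does not vanish --- the paper gets $r^D(\{0,1,p^d\})=(-1)(1-2-2-2+2+2+2)=-1$ --- which is what your own (correct) statement that all eight subsets satisfy $r^D(T)\equiv\pm1$ requires.
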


\begin{proof}
We apply Theorem~\ref{thm:D} to the following cases.

(i) Suppose $n=1+p^d$ for some integer $d>0$.
We compute $r^D(T)$ for every $T\subseteq P:=\{0,1,p^d\}$.
For each $\beta$ appearing in the definition of $r^D(T)$, we have
\[ \nu_p(\beta')=
\begin{cases}
1 & \text{if } \beta\in\{(n)\}; \\
2 & \text{if } \beta\in\{(0,n),(0,1,p^d),(0,p^d,1),(1,p^d-1,1),(1,p^d),(p^d,1)\}.
\end{cases} \]
Thus
\[ r^D(T) =  
\begin{cases}
(-1)(1-2-2-2+2+2+2)=-1, & \text{if } |T|=3;\\
1-2-2+2=-1, & \text{if } |T|=2; \\
(-1)(1-2)=1, & \text{if } |T|=1; \\
1, & \text{if } |T|=0.
\end{cases} \]
Thus $c^D_{p,\pm1}(n) = 2^{n-1}$ and $c^D_{p,i}(n)=0$ for all $i\not\equiv \pm1\pmod p$.

(ii) Suppose $n=i+j$, where $i$ and $j$ are distinct positive powers of $p$.
We compute $r^D(T)$ for every $T\subseteq P:=\{0,1,i,j\}$.
For each $\beta$ appearing in the definition of $r^D(T)$, we have
\[ \nu_p(\beta')=
\begin{cases}
1 & \text{if } \beta\in\{(n)\}; \\
2 & \text{if } \beta\in\{(0,n),(0,i,j),(0,j,i),(1,n-1), (1,i-1,j),(1,j-1,i),(i,j),(j,i)\}.
\end{cases} \]
Thus
\[ r^D(T)=
\begin{cases}
1-2\cdot4+2\cdot4=1, & \text{if } |T|=4;\\
(-1)(1-2\cdot3+2\cdot2)=1, & \text{if } |T|=3;\\
1-2-2+2=-1, & \text{if } T\in\{\{0,i\},\{0,j\},\{1,i\},\{1,j\}\}; \\
1-2-2=-3, & \text{if } T\in\{\{0,1\},\{i,j\}\}; \\
(-1)(1-2)=1, & \text{if } |T|=1; \\
1, & \text{if } |T|=0; \\
\end{cases} \]
It follows that $c^D_{p,\pm1}(n) = 7\cdot 2^{n-4}$, $c^D_{p,\pm3}(n) = 2^{n-4}$, and $c^D_{p,i}(n)=0$ for all $i\not\equiv \pm1, \pm3\pmod p$ when $p>3$ and $c^D_3(n)=(2^{n-3},7\cdot 2^{n-4}, 7\cdot 2^{n-4})$.
\end{proof}

For small values of $p$ and $n$, we compute $c^D_p(n)$ in Sage based on its definition and provide our data in Table~\ref{tab:D}, which agrees with the results in this section; note that the power of $2$ given by Corollary~\ref{cor:D} may or may not be the highest in $c^D_{p,i}(n)$. 
\begin{table}[h]
\tiny
\renewcommand{\arraystretch}{1.2}
\begin{center}
\begin{tabular}{c|ccccc} 
\toprule
$n$ & $p=3$ & $p=5$ & $p=7$ & $p=11$ \\ 
\midrule
$4$ & $(0,8,8)$ & $(0,2,12,2,0)$ & $(6,2,2,6,0,0,0)$ & $(0,4,0,0,0,0,6,6,0,0,0)$ \\ 
$5$ & $(12,16,4)$ & $(16,8,0,0,8)$ & $(6,6,10,2,4,0,4)$ & $(6,2,2,0,4,2,6,0,2,4,4)$ \\ 
$6$ & $(16,24,24)$ & $(0,32,0,0,32)$ & $(4,12,10,8,12,6,12)$ & $(12,6,0,0,6,6,4,4,14,8,4)$ \\ 
$7$ & $(56,36,36)$ & $(16,32,24,24,32)$ & $(64,32,0,0,0,0,32)$ & $(18,4,8,4,22,16,12,10,16,16,2)$ \\ 
$8$ & $(96, 80, 80)$ & $(52, 62, 40, 40, 62)$ & $(0, 128, 0, 0, 0, 0, 128)$ & $(18, 28, 24, 18, 22, 18, 30, 24, 26, 22, 26)$ \\
$9$ & $(256, 128, 128)$ & $(104, 112, 100, 96, 100)$ & $(0, 128, 32, 96, 96, 32, 128)$ & $(36, 40, 72, 52, 40, 24, 38, 62, 62, 50, 36)$ \\
$10$ & $2^9(0, 1, 1)$ & $2^7(0, 3, 1, 1, 3)$ & $2^3(20, 15, 19, 20, 20, 19, 15)$ & $2(47, 48, 45, 54, 47, 42, 36, 50, 50, 45, 48)$ \\
$11$ & $2^7(6, 5, 5)$ & $2^6(2, 7, 8, 8, 7)$ & $2^3(32, 51, 29, 32, 32, 29, 51)$ & $2^9(2, 1, 0, 0, 0, 0, 0, 0, 0, 0, 1)$ \\
$12$ & $2^8(2, 7, 7)$ & $2^4(38, 47, 62, 62, 47)$ & $2(280, 250, 341, 293, 293, 341, 250)$ & $2^{11}(0, 1, 0, 0, 0, 0, 0, 0, 0, 0, 1)$ \\
$13$ & $2^6(38,45,45)$ & $2^3(190, 237, 180, 180, 237)$ & $2(614, 631, 521, 599, 601, 530, 600)$ & $2^9(0, 4, 0, 3, 0, 1, 1, 0, 3, 0, 4)$ \\
$14$ & $2^4(306, 359, 359)$ & $2(1473, 1777, 1620, 1595, 1727)$ & $2^{11}(0, 3, 0, 1, 1, 0, 3)$ & $2^7(24, 15, 6, 4, 9, 18, 18, 9, 4, 6, 15)$ \\
$15$ & $2^{11}(6,5,5)$ & $2^{11}(2,2,5,5,2)$ & $2^{10}(2,7,1,7,7,1,7)$ & $2^6(50,58,48,32,46,47,47,46,32,48,58)$ \\
$16$ & $2^5(606, 721, 721)$ & $2^{10}(14, 15, 10, 10, 15)$ & $2^8(22, 44, 39, 34, 34, 39, 44)$ & $2^5(260, 195, 257, 158, 160, 124, 124, 160, 158, 257, 195)$ \\
\bottomrule
\end{tabular}
\end{center}
\caption{$c^D_p(n)$ for small values of $p$ and $n$}
\label{tab:D}
\end{table}

\section{Concluding remarks}\label{sec:conclusion}

In this paper, we use a result of Dickson~\cite{Dickson} on the congruence of multinomial coefficients to determine how many ribbon numbers indexed by compositions of $n$ belong to each congruence class modulo $p$.
We apply our result to some special cases of $n$, that is, when $n$ takes the following values:
\[ mp^d,\ p^{d_1}+\cdots + p^{d_k},\ 2p^d+p^e \]
For other values of $n$, our result becomes tedious, and it would be nice to develop a different approach.
There might be an interpretation of our results by the representation theory of $H_n(0)$, or by certain operations on standard tableaux of ribbon shapes, or even by the flag $h$-vector of the Boolean algebra of subsets of $[n-1]$, which could lead to results for more values of $n$.

We also extend our result to type $B$ and type $D$; in particular, we show that the ribbon numbers are all odd in these two types.
For the Coxeter system of $I_2(m)$, it is routine to check that there are two descent classes of size $1$ and two of size $m-1$.
For Coxeter systems of exceptional types, we list the sizes of the descent classes below based on computations in Sage, where $r^k$ means $k$ descent classes of size $r$.
\begin{itemize}
\item
$F_4$: $1^2$, $23^4$, $73^2$, $95^4$, $97^2$, $169^2$
\item
$H_3$: $1^2$, $11^2$, $19^2$, $29^2$, $29$ 
\item
$H_4$: $1^2$, $119^2$, $599^2$, $601^2$, $719^2$, $1199^2$, $1681^2$, $2281^2$
\item
$E_6$: $1^2$, $26^4$, $71^2$, $190^4$, $215^4$, $217^2$, $334^4$, $530^4$, $647^4$, $649^2$, $719^2$, $793^4$, $838^4$, $1106^4$, $1151^2$, $1225^4$, $1414^4$, $1729^2$, $2042^4$, $2663^2$
\item
$E_7$: $1^2$, $55^2$, $125^2$, $575^2$, $701^2$, $755^2$, $1331^2$, $1891^2$, $2015^2$, $3331^4$, $3401^2$, $4031^2$, $5473^2$, $6679^2$, $6749^2$, $7309^2$, $7939^2$, $8009^2$, $8189^2$, $8749^2$, $9505^4$, $10025^2$, $10079^2$, $10655^2$, $10765^2$, $14687^2$, $15373^2$, $15553^2$, $16003^2$, $16829^2$, $18145^2$, $19459^2$, $20035^2$, $21491^2$, $21545^2$, $22301^2$, $22931^2$, $25577^2$, $26207^2$, $26209^2$, $26839^2$, $27469^2$, $28855^2$, $29539^2$, $30185^2$, $30925^2$, $34273^2$, $34903^2$, $38249^2$, $39635^2$, $41021^2$, $42391^2$, $44477^2$, $45107^2$, $49645^2$, $50455^2$, $55007^2$, $59149^2$, $62551^2$, $69875^2$, $73331^2$, $94121^2$
\end{itemize}
Reducing the above sizes modulo a prime $p$ gives the number $c_{p,i}^{(W,S)}(n)$ of descent classes of sizes congruent to $i$ modulo $p$ for all $i\in\mathbb{Z}_p$.
For small values of $p$, see Table~\ref{tab:exceptional}.

\begin{table}[h]
\begin{center}
\tiny
\renewcommand{\arraystretch}{1.2}
\begin{tabular}{c|cccccc} 
\toprule
Type & $p=2$ & $p=3$ & $p=5$ & $p=7$ & $p=11$ & $p=13$ \\
\midrule
$E_6$ & $2^5(1, 1)$ & $2^5(0, 1, 1)$ & $2(8, 7, 5, 5, 7)$ & $2^2(4, 2, 1, 2, 0, 7, 0)$ & $2(1, 4, 5, 2, 7, 1, 6, 3, 1, 2, 0)$ & $2(5, 5, 0, 2, 1, 0, 3, 3, 2, 3, 6, 1, 1)$ \\
$E_7$ & $2^7(0, 1)$ & $2^6(0, 1, 1)$ & $2^2(10, 8, 3, 3, 8)$ & $2^6(0, 1, 0, 0, 0, 0, 1)$ & $2(4, 5, 12, 3, 8, 7, 4, 8, 5, 4, 4)$ & $2(6, 7, 7, 7, 4, 3, 4, 3, 5, 1, 4, 7, 6)$ \\
$F_4$ & $2^4(0, 1)$ & $2^3(0, 1, 1)$ & $2(2, 1, 1, 3, 1)$ & $2(0, 2, 2, 1, 2, 0, 1)$ & $2(0, 3, 0, 0, 1, 0, 0, 3, 0, 1, 0)$ & $2(1, 1, 0, 0, 2, 0, 1, 0, 1, 0, 2, 0, 0)$ \\
$H_3$ & $2^3(0, 1)$ & $2^2(0, 1, 1)$ & $2^2(0, 1, 0, 0, 1)$ & $2(0, 2, 0, 0, 1, 1, 0)$ & $2(1, 1, 0, 0, 0, 0, 0, 1, 1, 0, 0)$ & $2(0, 1, 0, 1, 0, 0, 1, 0, 0, 0, 0, 1, 0)$ \\
$H_4$ & $2^4(0, 1)$ & $2^3(0, 1, 1)$ & $2^3(0, 1, 0, 0, 1)$ & $2(1, 2, 1, 0, 1, 1, 2)$ & $2(1, 1, 0, 0, 2, 1, 0, 1, 0, 2, 0)$ & $2(0, 2, 1, 2, 2, 0, 1, 0, 0, 0, 0, 0, 0)$ \\
$I_2(5)$ & $2(1, 1)$ & $2^2(0, 1, 0)$ & $2(0, 1, 0, 0, 1)$ & $2(0, 1, 0, 0, 1, 0, 0)$ & $2(0, 1, 0, 0, 1, 0, 0, 0, 0, 0, 0)$ & $2(0, 1, 0, 0, 1, 0, 0, 0, 0, 0, 0, 0, 0)$ \\
$I_2(6)$ & $2^2(0, 1)$ & $2(0, 1, 1)$ & $2(1, 1, 0, 0, 0)$ & $2(0, 1, 0, 0, 0, 1, 0)$ & $2(0, 1, 0, 0, 0, 1, 0, 0, 0, 0, 0)$ & $2(0, 1, 0, 0, 0, 1, 0, 0, 0, 0, 0, 0, 0)$
\\
$I_2(7)$ & $2(1, 1)$ & $2(1, 1, 0)$ & $2^2(0, 1, 0, 0, 0)$ & $2(0, 1, 0, 0, 0, 0, 1)$ & $2(0, 1, 0, 0, 0, 0, 1, 0, 0, 0, 0)$ & $2(0, 1, 0, 0, 0, 0, 1, 0, 0, 0, 0, 0, 0)$ \\
\bottomrule
\end{tabular}
\end{center}
\caption{$c_{p,i}^{(W,S)}(n)$ for Coxeter systems $(W,S)$ of exceptional types}
\label{tab:exceptional}
\end{table}

The source code for our computations can be found here: 
\url{https://cocalc.com/share/public_paths/8af477db157c1e30eb5c0bc2653ffb3e2440f1c8}.

An important tool used in this work is Theorem~\ref{thm:Dickson}, which was obtained by Dickson~\cite{Dickson} as a generalization of Lucas' theorem on congruence of binomial coefficients to multinomial coefficients.
Note that Davis and Webb~\cite{DavisWebb} generalized Lucas' theorem to prime powers. 
Thus it might be possible to generalize our results to prime powers.

Finally, while our results are all deterministic, it would be meaningful to explore probabilistic features of the ribbon numbers modulo a prime.


\end{document}